\newtheorem{theorem}{Theorem}
\newtheorem{corollary}[theorem]{Corollary}
\newtheorem{lemma}[theorem]{Lemma}
\newtheorem{proposition}[theorem]{Proposition}
\theoremstyle{definition}
\newtheorem{definition}[theorem]{Definition}
\newtheorem{example}[theorem]{Example}
\newtheorem{remark}[theorem]{Remark}
\newtheorem{question}[theorem]{Question}
\newcommand{\conv}{{\operatorname*{conv}}}
\newcommand{\aff}{{\operatorname*{aff}}}
\newcommand{\tconv}{{\operatorname*{tconv}}}
\newcommand{\maxtconv}{\tconv^{\max}}
\newcommand{\pos}{{\operatorname*{pos}}}
\newcommand\1{\textbf{1}}
\newcommand\torus[1]{\RR^{#1}/\RR\1} 
\newcommand{\cA}{\mathcal{A}} 
\newcommand{\cB}{\mathcal{B}} 
\newcommand{\cC}{\mathcal{C}} 
\newcommand{\cD}{\mathcal{D}} 
\newcommand{\cF}{\mathcal{F}} 
\newcommand{\cH}{\mathcal{H}}
\newcommand{\cT}{\mathcal{T}} 
\newcommand{\NN}{\mathbb{N}}
\newcommand{\RR}{\mathbb{R}}
\newcommand{\ZZ}{\mathbb{Z}}
\newcommand{\TP}{\mathbb{T}\mathbb{P}}
\newcommand{\dome}[1]{\cD(#1)}
\newcommand{\extNewton}[1]{\mathcal{U}(#1)}
\newcommand{\tightspan}[1]{T(#1)}
\newcommand{\tropvariety}[1]{\cT(#1)}
\newcommand{\privileged}[1]{\Phi(#1)}
\newcommand{\normalcomplex}[1]{\operatorname{NC}(#1)}
\newcommand{\covdec}[1]{\operatorname{CovDec}(#1)}
\newcommand{\lexvec}{\eta}
\newcommand{\HasseDiagram}{\cH}
\newcommand\smallSetOf[2]{\{ #1 \mid #2 \}}
\newcommand\SetOf[2]{\left\{ #1 \mid #2 \right\}}
\newcommand\tropcomp[1]{{#1}^{\diamond}}
\DeclareMathOperator{\codim}{codim}
\DeclareMathOperator{\supp}{supp}
\DeclareMathOperator{\spn}{span}
\DeclareMathOperator{\rec}{rec}
\DeclareMathOperator{\skel}{skel}
\title{Shellings of unbounded polyhedra}
\author[G. Balla]{George Balla}
\address[George Balla]{}
\email{george.balla@mail.de}
\author[M. Joswig]{Michael Joswig}
\address[Michael Joswig]{
	Technische Universität Berlin,
	Chair of Discrete Mathe\-ma\-tics/Geo\-me\-try \\
	and Max-Planck Institute for Mathematics in the Sciences, Leipzig
}
\email{joswig@math.tu-berlin.de}
\author[L. Weis]{Lena Weis}
\address[Lena Weis]{Technische Universität Berlin, Chair of Discrete Mathe\-ma\-tics/Geo\-me\-try}
\email{weis@math.tu-berlin.de}
\begin{document}
\maketitle
  
\begin{abstract}
  The shellability of the boundary complex of an unbounded polyhedron is investigated.
  To this end, it is necessary to study suitable compactifications first.
  Results on polyhedra can then be exploited to derive a shellability result for tropical hypersurfaces.
  Under the hood there is a subtle interplay between the duality of polyhedral complexes and their shellability.
  Translated into discrete Morse theory, that interplay also gives that the tight span of a regular subdivision is collapsible, but not shellable in general.
\end{abstract}

\section{Introduction}
A shelling of a regular cell complex provides a certificate that the underlying topological space is \enquote{simple enough} to allow for a combinatorial treatment.
In particular, shellable complexes are homotopy equivalent to wedges of spheres; cf.\ \cite{Bjorner:1984,BjornerWachs:1996}.
A landmark result in this area has been obtained by Bruggesser and Mani \cite{BruggesserMani:1971}, who showed that the boundary complex of an arbitrary convex polytope is shellable.
The key idea is quite simple.
A generic line intersects each facet defining hyperplane in a unique point, and the ordering of these intersection points on that line induces a shelling order.
Shellings of this kind are known as line shellings.
Here we will extend this method to unbounded polyhedra.

While our overall strategy is quite natural, there is one technical obstacle to overcome.
Namely, we want to construct (line) shellings of polyhedral complexes which are unbounded.
In order for such a statement to even make sense it is necessary to talk about compactifications.
After all, the notion of shellability applies (or not) to finite regular cell complexes, which are necessarily compact.
We study two scenarios.
Firstly, we define the truncation $P'$ of an unbounded but pointed polyhedral complex $P$ as the intersection of $P$ with a half-space containing all the vertices of $P$ in its interior.
One can think of this as intersecting $P$ with a convex ball containing the vertices of $P$ in its interior.
Second, we study the tropical toric compactification of Kastner, Shaw and Winz \cite{Kastner+Shaw+Winz:2025} with respect to the recession cone of $P$.
In both cases, we find shellings (\cref{prop:unbounded-shellable} and \cref{cor:tropical-toric-shellable}).

Our main application is a shellability result in tropical geometry.
A tropical hypersurface $\cT$ in $\RR^d$ is the vanishing locus of a tropical polynomial, i.e., the set of points where the minimum over a finite collection of affine linear functions is attained at least twice; cf.\ \cite{Tropical+Book,ETC,Mikhalkin+Rau:2018}.
In this way, $\cT$ has a natural description as the codimension-1-skeleton of a polyhedral subdivision of the underlying space. 
The latter subdivision arises as the projection of an unbounded polyhedron in $\RR^{d+1}$ whose faces are in bijection with the cells of the subdivision.
In this way our results on shellings of polyhedra apply (\cref{thm:hypersurface-truncation}).
As a consequence we can strengthen an observation by Mikhalkin and Rau on the homotopy type of a tropical hypersurface \cite[Proposition 3.4.10]{Mikhalkin+Rau:2018}.

In our setup it is natural to further ask about shellability properties of tight spans (i.e., polyhedral duals) of arbitrary regular subdivisions.
General tight spans were introduced in \cite{HJ:2008}, as a generalization of tight spans of finite metric spaces \cite{Isbell:1964,Dress:1984}. 
Tropical polytopes form one class of examples, via their covector decompositions \cite[\S6.3]{ETC}; the latter objects are the \enquote{tropical complexes} of Develin and Sturmfels \cite{DevelinSturmfels04}.
Since tight spans and tropical polytopes, seen as regular cell complexes, are not necessarily pure, the generalization of shellability to nonpure complexes by Björner and Wachs \cite{BjornerWachs:1996} comes into the picture.
However, there are nonshellable tight spans; we give an explicit example related to a tropical plane curve of genus two (\cref{exmp:brodsky-tightspan}).
Yet in \cref{thm:collapsible} we can show that tight spans of regular subdivisions are always collapsible, and thus contractible.
Contractibility was known before; see \cite[Lem 4.5]{Hirai:2006} and \cite[Thm 10.59]{ETC}.
Our proof is based on work of Chari \cite{Chari:2000} who showed that shellings and collapsing strategies correspond to especially well behaved discrete Morse functions in the sense of Forman \cite{Forman:1998}.

\subsection*{Related work}
The role of shellability in tropical geometry has been investigated in the past.
Trappmann and Ziegler \cite{Trappmann+Ziegler:1998} showed that the tropical Grassmannians TGr$(2,n)$ (which are moduli spaces of metric trees) are shellable for arbitrary $n\geq 5$.
Markwig and Yu \cite{Markwig+Yu:2009} proved that the space of tropically collinear points is shellable.
Later, Amini and Piquerez introduced a notion of \enquote{tropical shellability} for tropical fans \cite[Def 5.3]{AminiPiquerez:2105.01504}.
Such a tropical fan arises naturally, e.g., as the star of a vertex of a tropical hypersurface.
Backman et al.\ \cite{BD-BNPP:2026} showed that the nested set complex of a Bergman fan is shellable.

It is known that the relationship between shellability and duality is subtle.
Notably, via the Folkman--Lawrence representation theorem \cite{Folkman+Lawrence:1978}, each not necessarily realizable oriented matroid gives rise to a cellular sphere $S$, which is shellable \cite[Thm 4.3.5]{OM:1999}.
The maximal cells of $S$ correspond to the topes of the oriented matroid.
Picking a tope yields a subcomplex of $S$, the Edmonds--Mandel complex of that tope.
The Edmonds--Mandel complex is shellable.
Yet it is open if the Las Vergnas complex, which is dual to the Edmonds--Mandel complex, is shellable, too.

\subsection*{Structure}
The paper is structured as follows.
We start in \cref{sec:polyhedra} by recalling the result of Bruggesser and Mani.
This is then applied to truncations of unbounded polyhedra as well as to their tropical toric compactifications.
In \cref{sec:subdivisions} we study shellings of normal complexes, which are dual to regular subdivisions.
Afterwards we turn to tropical geometry in \cref{sec:tropical}. 
In particular, we prove our main results concerning the shellability of the truncation of tropical hypersurface (\cref{thm:hypersurface-truncation})
and its tropical toric compactification (\cref{thm:hypersurface-tropical-toric}).
We present implications for tropical hyperplane arrangements in tropical projective space with full support.
Last, we show the collapsibility of tight spans in \cref{sec:tightSpans}.
The paper is closed with a brief discussion concerning the shellability of tropical hyperplane arrangements and tropical polytopes (\cref{sec:conclusion}).

\subsection*{Acknowledgments}
We thank Lars Kastner and Leonid Monin for discussions on tropical compactification, and Paul M\"ucksch for pointing out shellability properties of oriented matroids.
GB and MJ received support by the Deutsche For\-schungs\-ge\-mein\-schaft (DFG, German Research Foundation); \enquote{Symbolic Tools in Mathematics and their Application} (TRR 195, project ID 286237555).
MJ received further DFG support through The Berlin Mathematics Research Center MATH$^+$ (EXC-2046/1, project ID 390685689).

\section{Shellings of unbounded polyhedra}\label{sec:polyhedra}

Shellability is a fundamental concept in combinatorial topology.
Bj\"orner \cite{Bjorner:1984} suggested to study shellability in the framework of CW complexes which are \emph{regular}, i.e., the boundary of each cell is embedded.
Intuitively, a regular cell complex is shellable if it is built from its maximal cells in an inductive fashion such that the change in topology in each step is controlled precisely.
Consequently, the topology of shellable complexes is severely restricted: they are necessarily homotopy equivalent to wedges of spheres \cite[Prop 4.3]{Bjorner:1984}.
Bj\"orner and Wachs \cite{BjornerWachs:1996} generalized shellability to complexes which are not necessarily pure.
Yet here it suffices to restrict the attention to \emph{pure} complexes; these are the ones where all (with respect to inclusion) maximal cells share the same dimension.
\begin{definition}\label{def:shellable}
  A total ordering $\sigma_1,\ldots,\sigma_m$ of the maximal cells of a $d$-dimensional pure and regular CW complex is a \emph{shelling} if either $d = 0$ or if it satisfies the following conditions:
  \begin{itemize}
  \item[(S1)] There is an ordering of the maximal cells of $\partial\sigma_1$ which is a shelling.
  \item[(S2)] For $2\leq j\leq m$, there is an ordering of the maximal cells of $\partial\sigma_j$ which is a shelling and further, the maximal cells of $\partial\sigma_j\cap(\bigcup_{k=1}^{j-1}\partial\sigma_k)$ appear first in this ordering.
  \end{itemize}
\end{definition}

Our first objects of study come from polyhedral geometry; cf.\ \cite{Ziegler:Lectures+on+polytopes}. 
A \emph{polyhedron} is the intersection of finitely many affine halfspaces in Euclidean space.
It is \emph{pointed} if it does not contain any affine line; further, a \emph{polytope} is a polyhedron which is bounded.
Its \emph{recession cone} $\rec(P)$ contains those vectors in whose direction $P$ extends indefinitely, i.e.,
\begin{equation}
  \rec(P) \ \coloneq \ \SetOf{x \in \RR^d}{x + p \in P \text{ for all } p \in P}.
\end{equation}
A \emph{polyhedral complex} is a finite collection of convex polyhedra in, say, $\RR^d$ meeting face-to-face which is closed with respect to taking faces.
The recession cones of all polyhedra in the complex form a fan, called the \emph{recession fan} which is itself a polyhedral complex.
If each face in a polyhedral complex is bounded, i.e., a polytope, it is \emph{polytopal}.
Such complexes are standard examples of regular cell complexes.
A \emph{proper face} of a polyhedron $P$ is the intersection of $P$ with a supporting hyperplane.
The proper faces form the boundary complex $\partial P$; this is an example of a polyhedral complex.
If $P$ is bounded, then $\partial P$ is polytopal.

Consider an unbounded polyhedron $P$.
We would like to talk about shellability properties of its boundary.
However, the boundary $\partial P$ is not even a cell complex, for lack of compactness.
The following construction of a line shelling of the boundary of a polytope by Bruggesser and Mani \cite{BruggesserMani:1971} will help us to overcome this technical obstacle; see also Ziegler~\cite[Thm 8.12]{Ziegler:Lectures+on+polytopes}.

Let $P\subset\RR^{d+1}$ be a full-dimensional polyhedron, bounded or not, with facets $F_1$,$F_2$,$\ldots$, $F_m$.
Now we can pick an affine line $L$ through some interior point $o\in P$.
This defines points of intersection $p_i=L\cap \aff(F_i)$ with the facet defining hyperplanes.
We assume that $L$ is \emph{generic}, i.e., $p_i\neq p_j$ for $i\neq j$.
Fixing an orientation of $L$, there exists some $m'\leq m$ such that starting from $o$, the points on $L$ in the forward direction are sorted as
  $p_1$, $p_2$, $\dots$, $p_{m'}$,
up to relabeling.
Again, up to relabeling, we can assume that the remaining points in the opposite direction are sorted as $p_m,p_{m-1},\dots,p_{m'+1}$.
\begin{theorem}[{Bruggesser--Mani \cite{BruggesserMani:1971}}]\label{thm:BM}
  Assume that $P$ is bounded, i.e., $P$ is a polytope.
  Then the ordering $F_1,F_2,\dots,F_m$ is a shelling order.
  In particular, $\partial P$ is a shellable $d$-dimensional sphere.
  Moreover, by varying the point $o$ and the line $L$, the first and the last facet of the shelling order can be picked arbitrarily.
\end{theorem}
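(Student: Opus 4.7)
The plan is to prove the theorem by induction on the dimension of $P$, following the classical ``rocket tour'' of Bruggesser and Mani. The base case of a $1$-dimensional polytope is immediate, since a segment has two vertex-facets and any ordering of them vacuously satisfies \cref{def:shellable}.

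For the inductive step on a $(d+1)$-polytope, condition (S1) will follow at once from the inductive hypothesis applied to a line shelling of $\partial F_1$ inside $\aff(F_1)$. The substantial work is verifying (S2) and (S3) for each $j \ge 2$: I will exhibit a shelling of the $d$-polytope boundary $\partial F_j$ whose initial segment consists precisely of the ridges $F_j \cap F_i$ with $i < j$; the purity claim (S2) is then automatic, because initial segments of line shellings are pure of codimension one. Assume for concreteness that $j \le n'$ is a forward facet. The key visibility lemma is that a viewpoint $x$ on $L$ just past $p_j$ lies on the outer side of exactly the hyperplanes $\aff(F_1), \ldots, \aff(F_j)$: as $x$ moves forward from $o$ along $L$, each crossing $p_i$ flips the side of $x$ relative to $\aff(F_i)$ alone, by genericity of $L$. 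Equivalently, the facets of $P$ visible from $x$ are exactly $F_1, \ldots, F_j$, and a ridge $F_j \cap F_i$ is shared with an already-shelled facet iff $F_i$ is visible from $x$ iff $i < j$.

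To translate this visibility picture into a shelling of $\partial F_j$, I plan to construct an auxiliary line $L_j \subset \aff(F_j)$ from $L$ --- for instance, by intersecting $\aff(F_j)$ with a generic $2$-plane containing $L$, or by centrally projecting $L$ onto $\aff(F_j)$ along a suitable direction --- such that $L_j$ is generic in $\aff(F_j)$ and its forward direction crosses the facet hyperplanes $\aff(F_j) \cap \aff(F_i)$ with $i < j$ before those with $i > j$. The inductive hypothesis applied to $(F_j, L_j)$ will then supply the required shelling of $\partial F_j$, and its restriction to the first batch of ridges gives exactly conditions (S2) and (S3) for $F_j$. The case $j > n'$ is handled symmetrically by taking $x$ just backward of $p_j$. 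The main technical obstacle is precisely this construction of $L_j$ and the verification that it has the claimed properties: one has to arrange genericity and carry out a careful sign-of-inner-product bookkeeping with the outer normals of $P$ to confirm that the ``early'' ridges of $L_j$ inside $F_j$ match the $i < j$ ridges.

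For the final clause on prescribing the first and last facets, I would argue as follows. Given any two distinct target facets $F_1$ and $F_n$, the conditions that $F_1$ be the first facet hit as one moves forward along $L$ from $o$, and that $F_n$ be the last facet hit in the backward direction, are each open conditions on the pair $(o, L)$ and are independent when $F_1 \ne F_n$. Hence they can be satisfied simultaneously by a generic choice; concretely, one starts with a line joining interior points of $F_1$ and $F_n$, shifts $o$ slightly into $\operatorname{int} P$, and perturbs to ensure genericity.
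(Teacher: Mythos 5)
The paper itself gives no proof of \cref{thm:BM} --- it is quoted from Bruggesser and Mani --- so your reconstruction of the classical rocket-flight argument is the right thing to attempt, and your treatment of the forward facets ($j\le n'$) and of the final clause (take $L$ through interior points of the desired first and last facets and perturb) is essentially correct. The genuine gap is in the sentence ``the case $j>n'$ is handled symmetrically by taking $x$ just backward of $p_j$.'' The return leg is \emph{not} symmetric to the outbound leg. A point $x$ just past $p_j$ in the backward direction lies on the outer side of precisely the backward hyperplanes crossed so far, so the facets visible from $x$ are a subset of $\{F_{n'+1},\dots,F_j\}$, whereas the already-shelled facets are $F_1,\dots,F_{j-1}$; the equivalence ``visible from $x$ iff already shelled'' therefore fails outright for $j>n'$. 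What is true (and what the classical proof uses) is the complementary statement: at the moment $F_j$ is added on the return journey, the shelled facets are exactly those \emph{not} visible from the relevant viewpoint, so $F_j\cap(\bigcup_{i<j}F_i)$ is the union of the facets of $F_j$ \emph{not} visible from $p_j$ inside $\aff(F_j)$. That set is a \emph{final} segment of a line shelling of $\partial F_j$, and one must invoke the reversibility of line shellings (reading the shelling induced by the reversed line) to exhibit it as an initial segment and verify (S2)/(S3). Your sketch is missing this complementation-plus-reversal step entirely.

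A second, related point: for the argument to work at all, the backward facets must be ordered by \emph{decreasing} distance of $p_i$ from $o$, i.e., in the order in which the returning rocket crosses their hyperplanes coming in from $-\infty$. If instead you order them by increasing distance from $o$ (which is what a literal reading of the sorting convention preceding \cref{thm:BM} suggests), the claimed order is not a shelling: for a hexagon with $L$ nearly parallel to two opposite edges, the first backward facet would be the exit edge, whose two neighbours in the boundary cycle are both still unshelled, so its intersection with the previously shelled facets is empty, violating (S2). You should fix the convention explicitly before running the induction. Finally, for the construction of your auxiliary line $L_j$: rather than projecting $L$ into $\aff(F_j)$, the standard shortcut is to observe that the facets of the $d$-polytope $F_j$ visible from the exterior point $p_j\in\aff(F_j)$ form an initial segment of the line shelling of $\partial F_j$ obtained from the line through $p_j$ and a generic interior point of $F_j$; this gives (S3) directly from the inductive hypothesis without any sign bookkeeping against the normals of $P$.
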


\begin{remark}
  With the notation of \cref{thm:BM}, for arbitrary $k<m$ the subcomplex $F_1\cup F_2\cup \dots \cup F_k$ of the boundary complex $\partial P$ is a shellable $d$-ball.
\end{remark}
We now fix $P \subseteq \RR^d$ to be an unbounded polyhedron.
Throughout, we assume that $P$ is pointed; i.e., $P$ does not contain any affine line.
In order to be able to discuss any shellings of $P$, we need to pass to a suitable compactification.
In the sequel, we will consider two scenarios.

\subsection{Compactification via truncation}
Our first way of compactifying $P$ is rather simple.
If $P$ is a cone over a polytope $Q$, a shelling of $P$ is commonly obtained the following way.
First $P$ gets compactified by intersecting with a certain closed affine half-space $A$.
The halfspace $A$ is chosen such that its boundary hyperplane $H$ is perpendicular to the line connecting the apex of $P$ with some interior point, and additionally $H$ separates $P$.
Because $P$ is pointed, the hyperplane $H$ meets all the rays.
The resulting polytope $P\cap A$ has an extra facet which is combinatorially equivalent to $Q$. 
A shelling of $P\cap A$ is obtained by ordering the facets according to a shelling of the boundary of $Q$ and adding the new facet last.
The following construction is a generalization.

A pointed, full-dimensional unbounded polyhedron $P$ is the Min\-kows\-ki sum of the convex hull of its vertices $V \subseteq \RR^d$ and its recession cone $\rec(P)$.
We need a half-space $A$ whose boundary hyperplane sits far enough away from the convex hull $\conv(V)$ and intersects each unbounded facet.
Picking $a$ in the interior of the dual cone $\rec(P)^*$ of the recession cone $\rec(P)$ we can let
\[
  A \ \coloneq \ \SetOf{x \in \RR^d}{\langle -a, x \rangle \leq \max_{v \in V}{\langle -a, v \rangle} + \epsilon}, \quad \text{where } \epsilon > 0 \enspace .
\]
As the dimension of $\rec(P)$ is positive, the interior of $\rec(P)^*$ is not empty. 

The polytope $P \cap A$ is denoted by $P'$; its combinatorial type does not depend on the choice of $a$.
We call $P'$ the \emph{truncation of $P$}.
For each (unbounded) $k$-face $F$ of $P$, the truncation $F'$ arises as a $k$-face of $P'$.
In particular, this is true if $F$ is a facet of $P$.
The truncation has precisely one more facet, namely $B:=P\cap\partial A$, the \emph{bounding facet} of $P'$.
The poset of faces of $P'$ not contained in $B$ is naturally isomorphic with the poset of faces of $P$, except for $P$ itself.
As for every polytope, $\partial P'$ is shellable and we can pick the bounding facet to be the last one in a shelling order.
The following is a modest refinement of \cref{thm:BM}.

\begin{proposition}\label{prop:unbounded-shellable}
  There exists a shelling of $\partial P'$ with the bounding facet $B$ coming last.
  For each unbounded facet $F$ of $P$, that shelling of $\partial P'$ induces a shelling of $\partial F'$ such that the bounding facet of $F'$ comes last.
\end{proposition}

\begin{proof}    
  We pick a generic affine line $L$ through some interior point $o\in P'$ which intersects the bounding facet $B$ in a relatively interior point, say $b$.
  By orienting the line away from $B$ we obtain a shelling of $P'$ with $B$ coming last.
  In the following we can assume that, up to relabeling, $F'_1, \ldots, F'_m, B$ is a shelling of $\partial P'$.
  The point of intersection $L \cap \aff(F_i')$ is denoted by $p_i$.

  Let $F_i$ be an unbounded facet of $P$; its truncation $F_i'$ is a facet of $P'$.
  We consider the sequence of faces $F_i'\cap F_j'$, for $j\neq i$ together with $F_i'\cap B$, which is the bounding facet of $F_i'$.
  The dimensions of the intersections $F_i'\cap F_j'$ may be arbitrary.
  Yet, all facets of $F_i'$ occur in this way.
  In that case, i.e., when $F_i'\cap F_j'$ is a facet of $F_i'$ we call the index $j$ \emph{facet defining}.
  By omitting all faces which are not facets of $F_i'$ we obtain an ordered sequence of all the facets of $F_i'$, with the bounding facet $F_i'\cap B$ last.
  We claim that this sequence is a shelling order.

  To this end we pick a point $q$ in the relative interior of $F_i\cap B$.
  Since that point lies in the boundary of $B$, it is distinct from $b$.
  It follows that $\aff(p_i,q)$ is a line contained in the hyperplane $\aff(F_i')$, which we call $M$.
  As $p_i\notin B$, the line $M$ intersects the relative interior of $F_i'$.
  Each point $M \cap F_i'\cap F_j'$, where $j\neq i$ is facet defining, lies in the affine hull of $F_i'\cap F_j'$.
  Orienting the line $M$ from $q$ to $p_i$, it induces a shelling of $\partial F_i'$, where $F_i'\cap B$ comes last.
\end{proof}

\begin{remark}\label{rem:visible}
  A point $x$ lies \emph{beyond} a facet $F$ of a polytope $P$ if $x$ lies in the half-space bounded by $\aff(F)$ which does not contain $P$.
  Equivalently, the line segment $[x,y]$ for any point $y\in F$ intersects $P$ uniquely in $y$.
  In that case, the facet $F$ is called \emph{visible} from $x$; see \cite[p.241]{Ziegler:Lectures+on+polytopes}.
  A line shelling of a polytope therefore corresponds to the order in which the facets become visible when traveling along the line inducing the shelling.
  The facets $F_i'$ in \cref{prop:unbounded-shellable} visible from the point $p_i$ are of the form $F_i'\cap F_j'$, $j < i$.  
  Therefore, $F_j' \cap \bigcup_{i=1}^{j-1}F_i'$ coincides with the first few facets in the shelling of $F_j'$ induced by the line $M$.
\end{remark}

We illustrate \cref{prop:unbounded-shellable}.

\begin{example}
  Consider the polyhedron $P\subset\RR^3$ defined by the six inequalities 
  \begin{align*}
    \pm 5x_1 \pm 4x_2 - 2x_3 \leq 6 \quad \text{and} \quad \pm x_1 - 2x_3 \leq 2 \enspace .
  \end{align*}
  Its recession cone is full-dimensional and contains $(0,0,1)$ in its interior.
  In particular, $P$ is unbounded.
  Now $P'=P\cap A$ is the truncation, where
  \[
    A \ = \ \SetOf{x \in \RR^3}{x_3 \leq 1}
  \]
  is a truncating halfspace. The vertices of $P'$ read
  \[
    \begin{array}{llll}
      s=(0,-1,-1) & t=\left(\frac{8}{5},0,1\right) &
      u=\left(1,0,-\frac{1}{2}\right) & v=(0,2,1) \\[4pt]
      w=(0,1,-1) &
      x=\left(-\frac{8}{5},0,1\right) &
      y=\left(-1,0,\frac{1}{2}\right) &
      z=(0,-2,1) \enspace .
    \end{array}
  \]
  In the following, we describe faces by their vertex sets.
  For instance, $tvxz$ is the bounding facet.

  Let $L$ to be the line through the origin spanned by $(1,1,10)$.
  Note that the origin is an interior point of $P'$ and thus also of $P$.
  The line $L$ induces the shelling order $swy$, $suw$, $szxy$, $vwyx$, $sutz$, $tuwv$, $tvxz$ of $\partial P'$.

  As in the proof of \cref{prop:unbounded-shellable} we construct the induced shelling of $F_4'=vwyx$, which is the truncation of an unbounded facet of $P$.
  The intersection of the hyperplane spanned by $F_4'$ and $L$ is the point $p_4 = \left(-\tfrac{2}{7},-\tfrac{2}{7},-\tfrac{20}{7}\right)$.
  Now the line $M = \aff(p_4,q)$ with $q = (-\tfrac{4}{5},1,1)$ induces the shelling $wy$, $xy$, $vw$, $vx$ of the boundary of the quadrangle $vwyx$.
\end{example}

The \emph{k-skeleton} $\skel_k(C)$ of a cell complex $C$ is the subcomplex formed by the cells of dimension at most $k$.
The $k$-skeleton of a shellable complex is known to be shellable \cite[Thm 2.9]{BjornerWachs:1996}.
Recall, that the poset of faces of $P'$ not contained in $B$ is naturally isomorphic with the poset of faces of $P$ (apart from $P$ itself).
Consequently, the $k$-faces of $P'$ not contained in $B$ generate a (pure) subcomplex, denoted $\skel_k(P')\setminus B$.
From \cref{prop:unbounded-shellable} we obtain a result similar to \cite[Thm 2.9]{BjornerWachs:1996}.

\begin{corollary}\label{cor:unbounded-k-skeleton}
  The cell complex $\skel_k(P')\setminus B$ is shellable.
  In particular, for $k=d-1$, the complex $\skel_{d-1}(P')\setminus B$ is a shellable ball.
\end{corollary}

\begin{proof}
  We argue by induction.
  The base case is $k = d-1$.
  Then \cref{prop:unbounded-shellable} yields a shelling $F_1', \ldots, F_m', B$ of the boundary of $P'$.
  Leaving out $B$ we obtain a shelling of $\skel_{d-1}(P')\setminus B$.

  The inductive step also follows from \cref{prop:unbounded-shellable}.
  Namely, we have the induced shelling $G_1^{(i)}, \ldots, G_{s_i}^{(i)}$ of $\partial F_i'$ with the bounding facet $B_i$ of $F_i'$ coming after $G_{s_i}^{(i)}$, if it exists.
  Therefore, there exists $r_i \leq s_i$ such that
  \begin{align*}
    F_i' \cap \bigcup_{j=1}^{i-1} F_j'\ = \ G_1^{(i)} \cup \cdots \cup G_{r_i}^{(i)} \enspace .
  \end{align*}
  Hence, 
  \[
    G_1^{(1)}, \ldots, G_{s_i}^{(1)}, G_{r_2+1}^{(2)}, \ldots, G_{s_2}^{(2)}, \ldots, G_{r_m+1}^{(m)}, \ldots, G_{s_m}^{(m)}
  \] 
  is a shelling of the $(d{-}2)$-faces of $P'$ not contained in $B$.
  By repeating this process we obtain a shelling of $\skel_k(P')\setminus B$ for arbitrary $k$.
\end{proof}

\subsection{Tropical toric compactification}
In this section we describe the tropical toric compactification of a polyhedron with respect to its recession cone following Kastner, Shaw and Winz \cite{Kastner+Shaw+Winz:2025}.
Throughout the section, we write $\cF(Q)$ for the set of faces of some polyhedron $Q$, partially ordered by inclusion.

We start with a full-dimensional, pointed polyhedron $P$ in $N_{\RR} = N \otimes_{\ZZ} \RR$, $N \cong \ZZ^{d+1}$, with rational recession cone $\Sigma \coloneq \rec(P) = \pos(\rho_1,\ldots, \rho_r)$; the vertices do not need to be rational.
The vector space $N_\RR$ is isomorphic to $\RR^{d+1}$.
We consider the compactification $\tropcomp{P}$ in the \emph{tropical toric variety} 
\[
N(\Sigma) \ \coloneqq \ \coprod_{\sigma \in \cF(\Sigma)} N_{\RR}(\sigma), \quad N_{\RR}(\sigma) \ \coloneq \ N_{\RR}/ \spn(\sigma) \enspace .
\]
We call $N_{\RR}(\sigma)$ a \emph{stratum} of $N(\Sigma)$. 
Such a stratum is isomorphic to $\RR^{\codim \sigma}$, i.e., $N_{\RR}(\sigma) \cong \RR^{\codim \sigma}$.
The set of strata $\SetOf{N_{\RR}(\sigma)}{\sigma \in \cF(\Sigma)}$ is in bijection with faces of the dual cone of $\Sigma$, denoted $\Sigma^*$.
Note that the rationality of the recession cone is essential here, as the tropical toric variety is only defined for rational polyhedral fans.  

\begin{definition}
    The \emph{tropical toric compactification of $P$} with respect to $\Sigma$ is 
    \[
    \tropcomp{P} \ \coloneqq \ \coprod_{\sigma \in \cF(\Sigma)} \pi_{\sigma}(P)
    \]
    where $\pi_{\sigma}: N_{\RR} \twoheadrightarrow N_{\RR}(\sigma)$ is the canonical projection.    
\end{definition}
If instead of a single polyhedron, we consider a polyhedral complex $C$ in $N_\RR$ with rational recession fan $\Sigma_C$, its tropical toric compactification with respect to $\Sigma_C$ is 
the tropical toric compactification of the polyhedra $Q$ in $C$, i.e., $\tropcomp{C} \ \coloneqq \ \SetOf{\tropcomp{Q}}{Q \in C}$.

A \emph{face} of $\tropcomp{P}$ is the compactification $\tropcomp{F}$ of any face $F$ of $\pi_{\tau}(P)$, where $\tau \in \cF(\Sigma)$.
More precisely,
\begin{equation}\label{eq:tropcomp-face}
  \tropcomp{F} \ \coloneqq \ \coprod_{\tau \subseteq \sigma \in \cF(\Sigma),\; \pi_{\tau}(\sigma) \subseteq \rec(F)}\pi_{\sigma, \tau}(F) \enspace,
\end{equation}
where the map $\pi_{\sigma, \tau}$ is a projection from $N_{\RR}(\tau) \to N_{\RR}(\sigma)$ such that $\pi_{\sigma} = \pi_{\sigma, \tau} \circ \pi_{\tau}$.
The cone $\tau$ is called the \emph{trunk} of $\tropcomp{P}$.
A face $\tropcomp{F}$ of $\tropcomp{P}$ is a \enquote{polyhedron in the tropical toric variety $N(\Sigma)$} in the sense of \cite{Kastner+Shaw+Winz:2025}.
In particular, $\tropcomp{P}$ is a polyhedron in $N(\Sigma)$ since we can identify $N_{\RR}$ with $N_{\RR}(\{0\})$.
\begin{remark}\label{rem:tropical-toric-faces}
  If $F$ arises as a projection of $P$ to a stratum $N_\RR(\sigma)$, i.e., $F = \pi_\tau(P)$, and $\tau \subseteq \sigma$, then $\pi_{\tau}(\sigma) \subseteq \rec(F)$ follows automatically since $\rec(F) = \pi_\tau(\rec(P))$.
  In this case, we can therefore shorten \eqref{eq:tropcomp-face} to 
  \[
    \tropcomp{F} \ \coloneqq \ \coprod_{\tau \subseteq \sigma \in \cF(\Sigma)}\pi_{\sigma}(P) \enspace .
  \]
  
  More generally, if $F$ is a face of $\pi_\tau(P)$ with $\tau \in \cF(\Sigma)$, then there exists a face $E$ of $P$ with recession cone $\Sigma_E$ such that $F = \pi_\tau(E)$.
  The condition $\pi_{\tau}(\sigma) \subseteq \rec(F)$ is equivalent to $\sigma \in \cF(\Sigma_E)$ since $\rec(F) = \pi_\tau(\Sigma_E)$.
  Note that the recession cone of $E$ is a face of $\Sigma$ and we can therefore shorten \eqref{eq:tropcomp-face} to
  \[
    \tropcomp{F} \ \coloneqq \ \coprod_{\tau \subseteq \sigma \in \cF(\Sigma_E)}\pi_{\sigma}(E) \enspace . 
  \]
\end{remark}

\begin{remark}\label{rem:face-intersection}
  If $F$ and $G$ are both faces of $\pi_\sigma(P)$ then $\tropcomp{F} \cap \tropcomp{G} = \tropcomp{(F \cap G)}$.
  This is not necessarily true if $\tropcomp{F}$ and $\tropcomp{G}$ have different trunk.
  Independently of their trunk, is always true that $\tropcomp{F} \cup \tropcomp{G} = \tropcomp{(F \cup G)}$ \enspace.
\end{remark}

The \emph{dimension} of a face $\tropcomp{F}$ with trunk $\tau$ is defined by the dimension of the corresponding face $F$ in $\pi_{\tau}(P)$, i.e.,
\[
\dim(\tropcomp{F}) \ \coloneqq \ \dim(F) \enspace .
\]
Faces of $\tropcomp{P}$ of codimension $1$ are called \emph{facets}.
The dimension of a face of $\tropcomp{P}$ of the form $\tropcomp{\pi_{\sigma}(P)}$, $\sigma \in \cF(\Sigma)$, is equal to the codimension of $\sigma$.
Otherwise, $\pi_{\sigma}$ would not be a surjective map onto $N_\RR(\sigma)$ .
In particular, $\tropcomp{F}$ is a facet of $\tropcomp{P}$ if and only if $F$ is a facet of $P$ or of the form $\pi_\sigma(P)$ with $\sigma = \rho_i$ for $i \in [r]$.

\begin{remark}\label{rem:face-poset}
  The map
  \begin{equation}
    \phi: \cF(\tropcomp{P}) \to \cF(\Sigma^*)
  \end{equation}
  sending a face $\tropcomp{F}$ to its trunk induces a bijection between 
  the faces of $\tropcomp{P}$ of the form $\tropcomp{\pi_\sigma(P)}$ and the faces of the dual cone $\Sigma^*$.      
  The map $\phi$ is a poset isomorphism as $\tropcomp{\pi_\sigma(P)} \subseteq \tropcomp{\pi_\tau(P)}$ if and only if $\sigma \supseteq \tau$ if and only if $\sigma^* \subseteq \tau^*$.

  In this way we obtain a concise combinatorial description of the compactifying faces of the tropical toric compactification $\tropcomp{P}$.
  More precisely, as in \cref{rem:tropical-toric-faces}, we consider a face $E$ of $P$ and a face $F = \pi_\tau(E)$ of $\pi_\tau(P)$.
  The recession cone of $E$, again denoted $\Sigma_E$, is a face of $\Sigma$. 
  Let $\Sigma_E^*$ be the face of $\Sigma^*$ dual to $\Sigma_E$.
  More precisely, $\Sigma_E^*$ is a face of $\tau^*$ which, in turn, is a face of $\Sigma^*$.
  So, the map $\phi$ induces a bijection between the faces of $\tropcomp{F}$ of the form $\tropcomp{\pi_\sigma(E)}$ and the faces of in the interval $[\Sigma^*_E,\tau^*]$ in the poset $\cF(\Sigma^*)$.
  Geometrically, $[\Sigma^*_E,\tau^*]$ is the star of $\Sigma^*_E$ in $\tau^*$, and hence topologically a ball.
\end{remark}

The faces of $\pi_\sigma(P)$, $\sigma \in \cF(\Sigma)$, arise as projections of faces of $P$.
In particular, the union of the faces of $\pi_\sigma(P)$ coincides with projection of the boundary of $P$.
Therefore, we have $\tropcomp{\pi_\sigma(\partial P)} = \partial\tropcomp{\pi_\sigma(P)} = \tropcomp{(\partial\pi_\sigma(P))}$.
Note that $\tropcomp{\partial \pi_\sigma(P)}$ is a pure polyhedral complex in $N(\Sigma)$.
Next, we will show that it is shellable.

\begin{theorem}\label{thm:shelling-trop-toric}
  Suppose $F = \pi_\tau(E)$ is a not necessarily proper $k$-face of $\pi_{\tau}(P)$, $\tau \in \cF(\Sigma)$, where $E$ a face of $P$.
  Further, we abbreviate $\Sigma_E=\rec(E)$.
  
  Let $\tropcomp{G_1}, \ldots, \tropcomp{G_{\ell}}$ be the facets of $\tropcomp{F}$, such that the first $\ell'$ of them, for $\ell' < \ell$, are the facets of $F$.
  Then $\tropcomp{G_1}, \ldots, \tropcomp{G_{\ell}}$ is a shelling of the sphere $\partial\tropcomp{F}$ provided that the following two conditions are satisfied:
  \begin{enumerate}[i)]
  \item\label{assumption:1st} The ordering $G_{1}', \ldots, G_{\ell'}'$ is a shelling of the $(k{-}1)$-dimensional ball $\partial F' \setminus B_F$, where $B_F$ is the bounding facet of the truncation $F'$; and
  \item\label{assumption:2nd} the ordering $G_{\ell'+1}', \ldots, G_{\ell}'$ is a shelling of the star $[\Sigma^*_E,\tau^*]$.
  \end{enumerate}
\end{theorem}

\begin{figure}[t]
  \centering
  \begin{tikzpicture}[scale=1.2]          
    \node (A) at (-1.25,0) {};
    \node (B) at (1.25,0 ) {};
    
    \coordinate (M) at (0,-1.5);
    \def\r{1}
    
    \coordinate (A1) at ($(M)+(220:\r)$);
    \coordinate (B1) at ($(M)+(320:\r)$);
    
    \draw[thick] (A) -- (A1);
    \draw[thick] (B) -- (B1);
    
    \node (C) at (0,.5) {};
    \fill[gray] (A) circle (1.5pt);
    \fill[gray] (B) circle (1.5pt);
    \fill[gray] (C) circle (1.5pt);
    \draw[gray] (A.center) -- (C);
    \draw[gray] (B.center) -- (C);
    
    \draw[thick] (A1) -- ([shift={(235:\r)}]M);
    
    \draw[thick,dotted] ([shift={(235:\r)}]M) arc[start angle=235,end angle=305,radius=\r];
    
    \draw[thick] ([shift={(305:\r)}]M) -- (B1);
  \end{tikzpicture} \qquad     
  \begin{tikzpicture}        
    \coordinate (M) at (0,-1.5);
    \def\r{1.25}
    
    \coordinate (A1) at ($(M)+(220:\r)$);
    \coordinate (B1) at ($(M)+(320:\r)$);
    
    \node (A) at ($(A1)+(0,3)$) {};
    \node (B) at ($(B1)+(0,3)$) {};
    
    \draw[thick] (A) -- (A1);
    \draw[thick] (B) -- (B1);
    
    \fill[gray] (A) circle (1.5pt);
    \fill[gray] (B) circle (1.5pt);
    \draw[thick, gray] (A.center) -- (B.center);
    
    \draw[thick] (A1) -- ([shift={(235:\r)}]M);
    
    \draw[thick,dotted] ([shift={(235:\r)}]M) arc[start angle=235,end angle=305,radius=1.25];
    
    \draw[thick] ([shift={(305:\r)}]M) -- (B1);
  \end{tikzpicture}
  \caption{Tropical compactifications of two unbounded polyhedra in $\RR^2$.
    Left: $2$-dimensional, and Right: $1$-dimensional recession cone. 
    Faces only contained in the compactification are on the top. 
    The dots on the bottom indicate bounded faces.}
  \label{fig:tropcomp-planar}
\end{figure}

\begin{proof}
  Our proof goes by induction on the dimension $k=\dim F$.
  The base case is $k=2$.
  In that case, if the face $F$ is bounded, the claim is immediate.
  If $F$ is unbounded, its recession cone is either one- or two-dimensional; \cref{fig:tropcomp-planar} illustrates both cases.
  The tropical compactification of $F$ is shellable because a one-dimensional complex is shellable if and only if it is connected.
  Clearly, the specified ordering of the facets (i.e., edges) of $F$ forms a shelling.
  
  We proceed with $k>2$.
  First, pick an index $j\in\{2,3,\ldots,\ell'\}$.
  By induction the boundary of the compactified face $\tropcomp{G_j}$ admits a shelling of the desired kind.
  We show that the intersection of the tropical compactification $\tropcomp{G_{j}}$ with $\bigcup_{i=1}^{j-1} \tropcomp{G_{i}}$ coincides with the union of the first few facets in the shelling of $\tropcomp{G_j}$.
  By \cref{rem:face-intersection},
  \[
  \tropcomp{G_{j}} \cap \bigcup_{i=1}^{j-1} \tropcomp{G_{i}} \ = \ \tropcomp{G_{j}} \cap \tropcomp{\left(\bigcup_{i=1}^{j-1}G_{i}\right)} \ = \ \tropcomp{\left(G_{j} \cap \bigcup_{i=1}^{j-1} G_i\right)} \enspace .
  \]
  Recall, that $\cF(F)$ is isomorphic with $\cF(F'\setminus B_F)$.
  From assumption i) we obtain
  \[
    G_{j}' \cap \bigcup_{i=1}^{j-1} G_i' \ = \ H_1' \cup \cdots \cup H_r' \enspace ,
  \]
  where $H_1', \ldots, H_r'$ are the facets of $G_j'$ which come first in a shelling of $G_{j}'$ and none of them are contained in $B_F$.
  Via $\cF(F)\cong\cF(F'\setminus B_F)$ we get
  \[
    \tropcomp{G_{j}} \cap \bigcup_{i=1}^{j-1} \tropcomp{G_{i}} \ = \ \tropcomp{\left(G_{j} \cap \bigcup_{i=1}^{j-1} G_i\right)} \ 
    = \tropcomp{(H_1 \cup \cdots \cup H_r)} \ = \ \tropcomp{H_1} \cup \cdots \cup \tropcomp{H_r} \enspace,
  \]
  which are the first few facets in a shelling of $\tropcomp{G_j}$. 
  
  Secondly, we pick an index $j$ with $\ell'+1 \leq j \leq \ell$.
  For every $G_j$ there exists a cone $\tau_j \in \cF(\Sigma_E)$ containing $\tau$ and of dimension $\dim(\tau) + 1$ such that $G_j = \pi_{\tau_j, \tau}(F)$.
  In particular, we have $G_j = \pi_{\tau_j}(E)$.
  By induction $\tropcomp{\pi_{\tau_j}(E)}$ has a shelling with its $k_j$ facets of $\pi_{\tau_j}(E)$ coming first.
  
  The intersection of $\tropcomp{\pi_{\tau_j}(E)}$ with the union of the previous facets is
  \begin{align*}
    \tropcomp{\pi_{\tau_j}(E)} \ &\cap \ \left(\bigcup_{i=1}^{\ell'} \tropcomp{G_i} \ \cup \ \bigcup_{i=\ell'+1}^{j-1} \tropcomp{\pi_{\tau_i}(E)}\right) \\
    &= \ \left(\tropcomp{\pi_{\tau_j}(E)} \ \cap \ \tropcomp{(\partial F)}\right) \cup \left(\tropcomp{\pi_{\tau_j}(E)}  \cap \bigcup_{i=\ell'+1}^{j-1}  \tropcomp{\pi_{\tau_i}(E)} \right) \enspace.
  \end{align*}
  We investigate the intersections separately.
  First we have the intersection
  \begin{align}\label{eq:intersection-1}
    \tropcomp{\pi_{\tau_j}(E)} \cap \tropcomp{(\partial F)} &= \tropcomp{\pi_{\tau_j}(E)} \cap \tropcomp{(\partial\pi_\tau(E))} \\ 
    &= \tropcomp{\pi_{\tau_j}(E)} \cap \tropcomp{\pi_\tau(\partial E)} = \tropcomp{\pi_{\tau_j}(\partial E)} = \tropcomp{\partial\pi_{\tau_j}(E)} \enspace, \nonumber
  \end{align}
  which is the tropical compactification of the union of all faces of $\pi_{\tau_j}(E)$.
  Thus the intersection \eqref{eq:intersection-1} coincides with the union of the first few facets of $\tropcomp{\pi_{\tau_j}(E)}$ in the given shelling order.
  
  Next, we study the intersection of $\tropcomp{\pi_{\tau_j}(E)}$ and $\bigcup_{i=\ell'+1}^{j-1}  \tropcomp{\pi_{\tau_i}(E)}$.
  Recall from \cref{rem:face-poset}, that the poset of faces of the form $\tropcomp{\pi_\sigma(E)}$, where $\tau \subseteq \sigma \in \cF(\Sigma_E)$, is isomorphic to the star $[\Sigma_E,\tau^*]$.
  The latter is shellable by assumption ii).
  So there is a shelling $\sigma_1^*, \ldots, \sigma_s^*$ of the facets of $\tau_j^*$ containing $\Sigma_E$ such that
  \[
    \tau_j^* \cap \bigcup_{i=\ell'+1}^{j-1} \tau_{i}^* \ = \ \sigma_1^* \cup \cdots \cup \sigma_r^*
  \]
  for $r \leq s$.
  Consequently,
  \begin{align*}
    \tropcomp{\pi_{\tau_j}(E)}  \cap  \bigcup_{i=\ell'+1}^{j-1} \tropcomp{\pi_{\tau_i}(E)} \ = \ \bigcup_{i=1}^{r} \tropcomp{\pi_{\sigma_{i}}(E)} \enspace,
  \end{align*}
  which are exactly the next facets of $\tropcomp{\pi_{\tau_j}(E)}$ in the given shelling order.
  This completes our proof.
\end{proof}

\begin{corollary}\label{cor:tropical-toric-shellable}
  The tropical toric compactification $\tropcomp{P}$ of a polyhedron $P$ in $N_{\RR}$ with respect to the recession cone $\rec(P)$ is shellable.
\end{corollary}
\begin{proof}
  By \cref{prop:unbounded-shellable} the boundary $\partial P'$ of the truncation admits a shelling with the bounding facet last.
  The same applies to the truncation of the dual recession cone $\rec(P)^*$.
  These two observations correspond to the two assumptions in \cref{thm:shelling-trop-toric}, and hence the claim.
\end{proof}

As an immediate consequence any skeleton of $\tropcomp{P}$ is also shellable.
However, we can also look at the tropical toric compactification of $\skel_k(P)$ with respect to the recession fan of $\skel_k(P)$.
Note, that complex $\skel_k(P)$ is strictly contained in $\skel_k(\tropcomp{P})$.
The specific shelling order given in \cref{thm:shelling-trop-toric} tells us that this complex is shellable, too.
\begin{corollary}\label{cor:k-skeleton-trop-toric}
  The tropical toric compactification of $\skel_k(P)$ is shellable.
\end{corollary}
\begin{proof}  
  Let $F_1, \ldots, F_\ell$ be the $k$-faces of $P$ such that $F_1', \ldots, F_\ell'$ is a shelling of $\skel_k(P')\setminus B$ as in \cref{cor:unbounded-k-skeleton}.
  A shelling of $\tropcomp{F_j}$, $1 \leq j \leq \ell$, is obtained as in \cref{thm:shelling-trop-toric} from the shelling of $F_j'$.
  Then $\tropcomp{F_1}, \ldots, \tropcomp{F_\ell}$ is a shelling of the tropical toric compactification of the $k$-skeleton of $P$ since
  \[
    \tropcomp{F_j} \cap \bigcup_{i=1}^{j-1} \tropcomp{F_i} = \tropcomp{\left(F_j \cap \bigcup_{i=1}^{j-1} F_i\right)}, \quad 2 \leq j \leq \ell,
  \]
  coincides with the first few facets in a shelling of $\tropcomp{F_j}$.
\end{proof}

\section{Regular subdivisions and their duals}\label{sec:subdivisions}
Our goal is to use the results on unbounded polyhedra to obtain shellability properties of tropical hypersurfaces.
The connection is made by a specific class of unbounded polyhedra.
To this end let $\cA\subset \RR^d$ be a finite point configuration, which we assume to be affinely spanning.
A \emph{subdivision} of $\cA$ is a polytopal complex $\Phi$ such that the vertices of $\Phi$ form a subset of $\cA$ and $\bigcup_{F\in\Phi}F=\conv(\cA)$.
Note that a subdivision can also be defined for $\cA \subseteq \RR^d$.
We pick an arbitrary height function $\omega: \cA \to \RR$, which gives rise to the unbounded polyhedron
\begin{equation}\label{eq:arith:lifting}
  \extNewton{\cA,\omega} \ = \ \conv\SetOf{(u,\omega(u))}{u\in \cA} + \RR_{\geq 0} \cdot e_{d+1}
\end{equation}
in $\RR^d\times\RR=\RR^{d+1}$, which we call the \emph{extended Newton polyhedron} of $\cA$ with respect to $\omega$.
A \emph{lower face} of $\extNewton{\cA,\omega}$ has an outward pointing normal vector $h$ satisfying $\langle h,e_{d+1}\rangle<0$.
That is to say, the vector $h$ is pointing downward.
We often employ notions of direction like \enquote{up} or \enquote{down}; this always refers to the last coordinate direction $e_{d+1}$.
We call a polytope an \emph{$\omega$-cell} of $\cA$ if it arises as the projection to first $d$ coordinates of a lower face of $\extNewton{\cA,\omega}$.
The $\omega$-cells of $\cA$ form a polytopal subdivision $\privileged{\cA,\omega}$, which is called the \emph{regular subdivision} of $\cA$ induced by $\omega$.
Since $\cA$ affinely spans $\RR^d$, the maximal cells of $\privileged{\cA, \omega}$ are $d$-dimensional.
The following result is known.
A proof can be found in \cite[Section 16.3.1]{Goodman+ORourke+Toth:2018} or \cite[Thm 9.5.10]{Triangulations} in case the subdivision is a triangulation.
\begin{proposition}\label{prop:extNewton}
  A regular subdivision $\privileged{\cA, \omega}$ is shellable.
  Moreover, the first maximal cell in a shelling order of $\privileged{\cA, \omega}$ can be picked arbitrarily.
\end{proposition}

We are interested in polyhedra of the following kind.
\begin{definition}
  The \emph{dome} of $\cA$ with respect to $\omega$ is defined as
  \begin{equation}\label{eq:dome}
    \dome{\cA,\omega} \ \coloneqq \ \SetOf{(x,s)\in\RR^{d+1}}{x\in\RR^d,\, s\in\RR,\, s\leq \min_{u\in \cA} (\omega(u) + \langle u,x \rangle)} \enspace.
  \end{equation}
\end{definition}
This is a slight generalization of the dome of a tropical polynomial \cite[\S1.1]{ETC}.
We explain the connection.

\begin{remark}\label{rem:polynomial}
  Suppose that $\cA\subset\ZZ^d$ is a finite set of lattice points.
  Setting
  \begin{equation}\label{eq:polynomial}
    \begin{split}
      F(x) \ &= \ \bigoplus_{u\in \cA} \omega(u) \odot x_1^{u_1} x_2^{u_2} \dots x_d^{u_d} \\
             &= \ \min\SetOf{\omega(u)+ u_1 x_1 +u_2 x_2 + \dots + u_d x_d}{u\in \cA} \\
             &= \ \min\SetOf{\omega(u)+\langle u,x \rangle}{u\in \cA} \enspace ,
    \end{split}
  \end{equation}
  where $\oplus=\min$ and $\odot=+$, defines a $d$-variate min-tropical polynomial $F$.
  The support of $F$ is the set $\cA$.
  We have $\dome{\cA,\omega}=\dome{F}$ in the notation of \cite[\S1.1]{ETC}.
  Aspects of tropical geometry will be discussed further in \cref{sec:tropical}.
\end{remark}
Geist and Miller study multivariate polynomials with real exponents \cite{Geist+Miller:2023}; the latter were called \enquote{Hahn polynomials} in \cite[\S10.7]{ETC}.
So by extending the definition \eqref{eq:polynomial} to an arbitrary point set, not necessarily integral or rational, we arrive at tropicalizations of such polynomials.
Both, the lineality space and recession cone of $\dome{\cA, \omega}$, only depend on $\cA$.
Since $\cA$ is affinely spanning the lineality space is trivial making the dome $\dome{\cA, \omega}$ a pointed polyhedron.

The next result is standard; see \cite[Prop 1.1, Thm 1.13]{ETC} for a proof; see also \cite[Thm 10.59]{ETC}.
\begin{proposition}\label{prop:duality}
  Let $\cA \subset \RR^d$ be a finite point set which is affinely spanning, and let $\omega:\cA\to\RR$ be an arbitrary height function.
  Then
  \begin{enumerate}[a.]
  \item the dome is a convex polyhedron of dimension $d+1$ which is unbounded in the negative $e_{d+1}$ direction;
  \item there is an inclusion reversing bijection, $\beta$, between the faces of the dome $\dome{\cA,\omega}$ and the bounded faces of the polyhedron~$\extNewton{\cA,\omega}$;
  \item via orthogonal projection the proper faces $\extNewton{\cA,\omega}$ correspond to the cells of the dual subdivision $\privileged{\cA,\omega}$.
  \end{enumerate}
\end{proposition}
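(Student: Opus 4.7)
The plan is to handle the three parts in order. Part (a) is a direct reading of \eqref{eq:dome}; part (b) is the heart of the matter and amounts to a polar duality between the upper envelope of a family of affine functions and the lower hull of its lifted coefficient points; part (c) is then a projection argument that exploits the very restricted recession cone of $\extNewton{A,\omega}$. For (a), the formula \eqref{eq:dome} expresses $\dome{A,\omega}$ as the intersection of the $|A|$ closed affine half-spaces $H_u := \{(x,s)\in\RR^{d+1} : s\leq \omega(u)+\langle u,x\rangle\}$ for $u\in A$, so it is a polyhedron. Each $H_u$ contains the downward vertical ray through any of its points, so the dome does too; in particular it is unbounded in the $-e_{d+1}$ direction and $(d+1)$-dimensional, since any $(x_0,s_0)$ with $s_0<\min_{u\in A}(\omega(u)+\langle u,x_0\rangle)$ admits an open ball inside the dome.

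For (b) I would construct $\beta$ explicitly through tight constraints. To a nonempty face $F$ of the dome I associate the index set
\[
  S(F) \ := \ \SetOf{u\in A}{s=\omega(u)+\langle u,x\rangle \text{ for all }(x,s)\in F}
\]
of inequalities tight along $F$, and set $\beta(F) := \conv\SetOf{(u,\omega(u))}{u\in S(F)}$. Picking any relative interior point $(x_0,s_0)\in F$, the linear functional $(u',y)\mapsto y+\langle u',x_0\rangle$ on $\extNewton{A,\omega}$ attains its minimum precisely on $\beta(F)$, and its outward minimizing direction $-(x_0,1)$ is strictly downward, certifying that $\beta(F)$ is a bounded lower face. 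Inclusion reversal is then immediate: $F\subseteq F'$ forces $S(F)\supseteq S(F')$, whence $\beta(F)\supseteq \beta(F')$. The main obstacle is surjectivity of $\beta$: given a lower face $G$ of $\extNewton{A,\omega}$, one normalizes an outward normal to the form $(a,-1)$ and checks that the face of the dome consisting of those $(x,s)$ where the same $u\in A$ attain $\min_{u'\in A}(\omega(u')+\langle u',-a\rangle)$ is mapped to $G$ under $\beta$. This is the classical Legendre-type correspondence and is where the real work lies.

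For (c), since the recession cone of $\extNewton{A,\omega}$ is spanned by $e_{d+1}$, a face of $\extNewton{A,\omega}$ is bounded precisely when it does not contain that ray, i.e., when it is a lower face. The orthogonal projection $\pi\colon(x,s)\mapsto x$ restricted to any lower face is injective, because two distinct points of a lower face with the same $\pi$-image would span a vertical segment, incompatible with the face admitting a strictly downward outward normal. The image $\pi(F)$ of a lower face $F$ is then the convex hull of the projections of its vertices, which is an $\omega$-cell by the definition of $\privileged{A,\omega}$. Combined with (b), this identifies the faces of the dome with the cells of the regular subdivision in an inclusion-reversing fashion.
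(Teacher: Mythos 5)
The paper does not actually prove \cref{prop:duality}: it is labelled as standard and delegated to \cite[Prop 1.1, Thm 1.13]{ETC}, so there is no in-paper argument to compare against. Your proposal supplies the standard Legendre-type duality proof that the reference itself uses: encode a face $F$ of the dome by its set $S(F)$ of tight inequalities, observe that for a relative interior point $(x_0,s_0)$ of a proper face the functional $(u',y)\mapsto y+\langle u',x_0\rangle$ is minimized on $\extNewton{A,\omega}$ exactly at $\conv\smallSetOf{(u,\omega(u))}{u\in S(F)}$, and invert via outward normals of the form $(a,-1)$. This is correct in substance, and parts (a) and (c) are fine (in (c) your observation that bounded faces are exactly the lower faces, because the recession cone of a face $\arg\max\langle h,\cdot\rangle$ is $\smallSetOf{r\in\RR_{\geq0}e_{d+1}}{\langle h,r\rangle=0}$, is exactly the right point). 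Two things you should tighten in (b). First, you argue inclusion reversal and surjectivity but never injectivity; it follows because $F$ is recovered from $S(F)$ as the locus where those constraints are tight, and $S(F)$ is recovered from $\beta(F)$ since any $u'$ with $(u',\omega(u'))\in\beta(F)$ has $\omega(u')+\langle u',x_0\rangle=s_0$ and hence lies in $S(F)$ — say this. Second, your tightness computation uses $s_0=\min_{u}(\omega(u)+\langle u,x_0\rangle)$, which holds only for \emph{proper} faces of the dome; for the improper face $S=\emptyset$ and $\beta$ produces the empty face, so you should state explicitly which convention (proper faces versus all faces, empty face included or not) makes the count match on both sides.
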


Via orthogonal projection the faces of $\dome{\cA,\omega}$ yield a polyhedral decomposition of $\RR^d$, called the \emph{normal complex}, which we denote $\normalcomplex{\cA,\omega}$.
Similar to the case of unbounded polyhedra, we need to consider a compactification before we can talk about shellability properties of the normal complex.
For instance, we can compactify the normal complex $\normalcomplex{\cA, \omega}$ by intersecting with a sufficiently large $d$-dimensional smooth convex body $\cB$.
More precisely, the convex body $\cB \subseteq \RR^d$ is required to contain all vertices of $\normalcomplex{\cA, \omega}$ in its interior; by convexity of $\cB$ it follows that all bounded faces are contained in $\cB$ as well.
Then the boundary $\partial\cB$, which is a $(d{-}1)$-sphere, intersects each unbounded face of $\normalcomplex{\cA, \omega}$ in its relative interior.
We denote the \emph{truncated normal complex} by $\normalcomplex{\cA, \omega} \cap \cB$.
By construction, $\normalcomplex{\cA, \omega} \cap \cB$ is a regular cell complex: its cells arise as intersections of faces of $\normalcomplex{\cA, \omega}$ with $\cB$.
Note that the face poset of $\normalcomplex{\cA, \omega} \cap \cB$ does not depend on the specific choice of $\cB$.  
In particular, the truncated normal complex is naturally related to the truncation $\dome{\cA, \omega}'$.
The following result explains why we call both objects truncated.
\begin{lemma}
  The poset $\cF(\skel_d(\dome{\cA, \omega}')\setminus B)$, where $B$ is the bounding facet, is isomorphic to $\cF(\normalcomplex{\cA, \omega} \cap \cB)$.
\end{lemma}

This connection between the compactifications of $\dome{\cA, \omega}$ and $\normalcomplex{\cA, \omega}$ also exists in the tropical toric setting.
Observe, that the recession fan of $\normalcomplex{\cA, \omega}$ arises as the orthogonal projection of the faces of the recession cone of $\dome{\cA, \omega}$.
Moreover, it coincides with the normal fan of $\conv(\cA)$.
So we can conclude the following.
\begin{lemma}
  The posets $\cF(\tropcomp{(\skel_d(\dome{\cA,\omega}))})$ and $\cF(\tropcomp{\normalcomplex{\cA, \omega}})$ are naturally isomorphic.
\end{lemma}

Both compactifications, $\dome{\cA, \omega}'$ and $\tropcomp{\dome{\cA, \omega}}$, are shellable by \cref{prop:unbounded-shellable} and \cref{cor:tropical-toric-shellable}.
As we can pick the additional facet $B$ of $\dome{\cA, \omega}'$ to come last in its shelling order, it follows that $\normalcomplex{\cA, \omega} \cap \cB$ is shellable, too.
Therefore, the tropical toric compactification of the normal complex $\tropcomp{\normalcomplex{\cA, \omega}}$ is shellable.
\begin{proposition}\label{prop:normal-complex}
  \begin{enumerate}[a.]
    \item The truncated normal complex $\normalcomplex{\cA, \omega} \cap \cB$ is shellable.
    \item The tropical toric compactification of the normal complex $\tropcomp{\normalcomplex{\cA, \omega}}$ with respect to its recession fan is shellable.
  \end{enumerate}
\end{proposition}

Next, we will consider the orthogonal projection of the bounded subcomplex of $\dome{\cA,\omega}$.
The latter projection is the \emph{tight span} of $\cA$ with respect to $\omega$, denoted $\tightspan{\cA,\omega}$.
As $\dome{\cA,\omega}$ is pointed, there is at least one bounded face.
So $\tightspan{\cA,\omega}$ is not empty but not necessarily pure.
By construction, $\tightspan{\cA,\omega}$ is piecewise linearly isomorphic with the bounded subcomplex of $\normalcomplex{\cA,\omega}$.
The tight span is known to be contractible \cites[Lemma 4.5]{Hirai:2006}[Example 10.56 and Theorem 10.59]{ETC}.
Below, in \cref{thm:collapsible}, we will see that $\tightspan{\cA,\omega}$ is even collapsible.
However, as our next example shows, shellability is too much to ask for.

\begin{example}\label{exmp:brodsky-tightspan}
  Let $\cA$ be the seven lattice points in the planar quadrangle
  \begin{equation}\label{eq:quad}
    \conv\{(0,2),(1,0),(3,0),(3,1)\} \enspace .
  \end{equation}
  \cref{fig:brodsky-tightspan} displays a regular (and unimodular) triangulation of $\cA$.
  The tight span is two-dimensional but neither pure nor shellable (in the Björner--Wachs sense); see \cite[Fig 1(d)]{BjornerWachs:1996}.
\end{example}

\section{Shellings of tropical hypersurfaces}\label{sec:tropical}
\begin{figure}[b]
  \centering
  \begin{tikzpicture}[x  = {(.5cm,0cm)},
    y  = {(0cm,.5cm)},
    z  = {(0cm,0cm)},
    scale = .85,
    color = {black}]
    \begin{scope}[shift={(-7,0)}]
      \input{images/hyperellipticCurveSmall_truncation.tikz}
    \end{scope}
    \hfill
    \begin{scope}[shift={(7,0)}]
      \input{images/hyperellipticCurveSmall_toric.tikz}
    \end{scope}
  \end{tikzpicture}
  \caption{Left: truncation of the tropical hyperelliptic curve $C$ from \cref{ex:hyperelliptic-curve} and truncating ball $\cB$.
    Right: tropical toric compactification $\tropcomp{C}$ inside the tropical toric variety $N(\Sigma)$, where $\Sigma$ is the recession fan of the normal complex induced by $C$.}
  \label{fig:hyperelliptic-curve}
\end{figure}

We finally turn to our main application.
As in \cref{rem:polynomial} we consider the $d$-variate tropical polynomial
\begin{equation}\label{eq:polynomial-again}
  F(x) \ = \ \bigoplus_{u\in \cA} \omega(u) \odot x^{u} \ = \ \min\SetOf{\omega(u)+\langle u,x \rangle}{u\in \cA} \enspace ,
\end{equation}
where $\cA\subset\ZZ^d$ is the support, which is assumed to be finite and affinely spanning, and $\omega$ is the vector of coefficients.
Since we allow for negative exponents, in fact, $F$ is a tropical Laurent polynomial.
The \emph{tropical hypersurface} $\tropvariety{F}$ is the polyhedral complex which arises as the tropical zero locus of a tropical polynomial, i.e., the set of points in $\RR^d$ where the minimum \eqref{eq:polynomial-again} is attained at least twice; see \cites[Def 3.1.1]{Tropical+Book}[Def 1.3]{ETC}.
In the sequel we abbreviate $\privileged{F}=\privileged{\cA,\omega}$, $\dome{F}=\dome{\cA,\omega}$, $\normalcomplex{F}=\normalcomplex{\cA, \omega}$, $\tightspan{F}=\tightspan{\cA,\omega}$ etc.

\begin{remark}
  While we choose $\min$ as tropical addition, one can also choose $\max$ instead.
  The map $x \mapsto -x$ allows us to pass from one to the other.
  In \cref{sec:conclusion} below, we will see how these two conventions interact.
\end{remark}

By \cite[Cor 1.6]{ETC} the tropical hypersurface $\tropvariety{F}$ in $\RR^d$ agrees with the $(d{-}1)$-skeleton of the normal complex $\normalcomplex{F}$.
In particular, it is a pure polyhedral complex.
Similar to the normal complex, we can bound the tropical variety $\tropvariety{F}$ by intersecting it with a smooth convex body $\cB$ which contains the vertices of $\tropvariety{F}$ in its interior.
The resulting \emph{truncated tropical hypersurface} is again denoted by $\tropvariety{F} \cap \cB$.
This observation allows us to show the following shellability results for tropical hypersurfaces in $\RR^d$.

\begin{theorem}\label{thm:hypersurface-truncation}
  The truncated tropical hypersurface $\tropvariety{F} \cap \cB$ in $\RR^d$ is a shellable regular cell complex.
\end{theorem}

\begin{proof}
  It suffices to show that the truncated $\codim$-1-skeleton of $\normalcomplex{F}$ is shellable.
  Note that it does not coincide with the $(d{-}1)$-skeleton of $\normalcomplex{F} \cap \cB$ since every unbounded maximal cell of $\normalcomplex{F} \cap \cB$ contains an additional $(d{-}1)$-face not contained in $\tropvariety{F} \cap \cB$.  
  However, a shelling of $\normalcomplex{F} \cap \cB$ can be induced by a shelling of the boundary of $\dome{F}'$; see \cref{prop:normal-complex}.
  Additionally, the $(d{-}1)$-faces of $\dome{F}'$ not contained in its bounding facet $B$ form a shellable cell complex by \cref{cor:unbounded-k-skeleton}.
  The statement follows because these faces are in bijection with the truncated $\codim$-1-skeleton of $\normalcomplex{F}$.
\end{proof}

A similar result holds for the tropical toric compactification.

\begin{theorem}\label{thm:hypersurface-tropical-toric}
  The tropical toric compactification of the tropical hypersurface $\tropvariety{F}$ in $\RR^d$ with respect to its recession cone is a shellable regular cell complex.
\end{theorem}

\begin{proof}
  As the tropical hypersurface $\tropvariety{F}$ in $\RR^d$ agrees with the $(d{-}1)$-skeleton of the normal complex $\normalcomplex{F}$, 
  its recession cone is the codimension-1-skeleton of the recession cone of $\normalcomplex{F}$.
  Therefore it suffices to show that there exists a shelling of $\tropcomp{\normalcomplex{F}}$ with respect to $\rec(F)$ such that the tropical toric compactification of the
  codimension-1-skeleton of $\normalcomplex{F}$ is a shellable cell subcomplex.
  Note, that tropical toric compactification of the codimension-1-skeleton of $\normalcomplex{F}$ does not agree with the codimension-1-skeleton of $\tropcomp{\normalcomplex{F}}$ 
  for the following reason. 
  The tropical toric compactification of an unbounded maximal cell $X$ of $\normalcomplex{F}$ has as many additional $\codim{-}1$-faces as there are rays spanning its recession cone; see \cref{rem:face-poset}. 
  Since the recession cone of $X$ has dimension greater or equal to $1$, there is at least one extra facet for every unbounded cell of $\normalcomplex{F}$. 
 
  Recall that $\normalcomplex{F}$ arises as the orthogonal projection of $\dome{F}$.
  Hence, the tropical toric compactification of codimension-1-skeleton of $\normalcomplex{F}$ is shellable if the tropical toric compactification of the codimension-1-skeleton of $\dome{F}$ is shellable. 
  Therefore, the result follows from \cref{cor:k-skeleton-trop-toric}.
\end{proof}

As a shelling of a cell complex determines its homotopy type \cite[Prop 4.3]{Bjorner:1984}, we obtain \cite[Proposition 3.4.10]{Mikhalkin+Rau:2018} of Mikhalkin and Rau as a corollary.
\begin{corollary}
  The homotopy type of a tropical hypersurface $\tropvariety{F}$ in $\RR^d$ is a bouquet of $k$ many $(d{-}1)$-spheres, where $k$ is the number of interior vertices of $\privileged{F}$.
  In particular, the homotopy type of $\tropvariety{F}$ coincides with the homotopy type of $\tropvariety{F} \cap \cB$ and $\tropcomp{\tropvariety{F}}$.
\end{corollary}

The truncation of a tropical plane curve $\cC$ and its tropical toric compactification are combinatorially equivalent; e.g., see \cref{fig:hyperelliptic-curve}.
In that case the shellability is trivial because a finite graph is shellable if and only if it is connected.
Contracting the unbounded edges of $\cC$ yields its \emph{metric skeleton}; see \cite{Amini+Baker+Brugalle+Rabinoff:2015} and \cite[\S1.6]{ETC}.
That metric skeleton is fully contained in both compactifications.
That is the only data which matters for locating a tropical plane curve in the moduli spaces of tropical plane curves of fixed genus \cite{BJMS:2015}.

\begin{example}\label{ex:hyperelliptic-curve}
  The tropical hypersurface $\cC=\tropvariety{F}$ of the bivariate tropical polynomial
  \[
  F(x,y) \ = \ \min\{2y,\, 3+x,\, x+y,\, 1+2x,\, 1+2x+y,\, 3x,\, 4+3x+y\}
  \]
  is the genus two tropical plane hyperelliptic curve of dumbbell type shown in \cref{fig:hyperelliptic-curve}; cf.\ \cite[Ex 2.5]{BJMS:2015} for a discussion of tropical plane curves of genus two.
  By \cref{thm:hypersurface-truncation} the truncation of $\cC$ is shellable.
  Moreover, the tropical toric compactification of $\cC$ is shellable as well.
  The seven lattice points in the quadrangle \eqref{eq:quad} form the support $\cA=\supp(F)$.
  In fact, the dual subdivision is the triangulation $\Phi=\privileged{F}$ from \cref{exmp:brodsky-tightspan} and \cref{fig:brodsky-tightspan}.
\end{example}

\subsection*{Shellings of tropical hyperplane arrangements}
\label{subsec:hyperplane-arrangements}
Now we consider the situation where the tropical polynomial $F$ is homogeneous of degree $\delta\geq 1$, i.e., for $x \in \RR^d$ and $\lambda \in \RR$ we have $F(\lambda \odot x) = F(x) + \lambda \delta$.
This is the case if and only if the the support set $\cA$ is contained in $\smallSetOf{u\in\NN^d}{\sum u_i=\delta}$.
We assume that $\cA$ affinely spans the hyperplane $\sum u_i=\delta$.
The tropical hypersurface $\tropvariety{F}$ contains the set $\RR\1$, whence we consider the image of the tropical hypersurface in the quotient $\torus{d}$.
The latter quotient is called the \emph{tropical projective torus}.
Observe that the tropical projective torus $\torus{d}$ is homeomorphic to $\RR^{d-1}$.
In the homogeneous case the dome $\dome{F}$ has a nontrivial lineality space, which is spanned by the vector $(\1,\delta)\in\RR^{d+1}$; in \cite[Remark 1.20]{ETC} it is erroneously stated that $\1$ spans the lineality space.
In the sequel we identify the dome with its image modulo $\RR(\1,\delta)$.
The orthogonal projection of the dome $\dome{F}$ modulo its lineality space yields the normal complex $\normalcomplex{F}$ in the tropical projective torus.
Therefore, both compactifications of the tropical hypersurface in $\torus{d}$, again denoted by $\tropvariety{F} \cap \cB$ and $\tropcomp{\tropvariety{F}}$ respectively, are shellable.
In this way the next result is just the homogeneous version of \cref{thm:hypersurface-tropical-toric}.
\begin{corollary}\label{cor:homogeneous-hypersurface}
  Let $F$ be a homogeneous $d$-variate tropical polynomial.
  Then both the truncation and tropical toric compactification of the tropical hypersurface $\tropvariety{F}$ in $\torus{d}$ are shellable.
\end{corollary}

\begin{remark}
  Courdurier showed that the star of any vertex in a shellable polytopal complex is shellable \cite{Courdurier:2006}; see also \cite[Lem 8.7]{Ziegler:Lectures+on+polytopes} for the special case of simplicial complexes.
  So, in view of \cref{cor:homogeneous-hypersurface}, the stars of vertices of tropical hypersurfaces are shellable.
  Amini and Piquerez studied the \enquote{tropical shellability} of tropical fans; star fans of vertices of tropical hypersurfaces form examples \cite{AminiPiquerez:2105.01504}.
  Yet the precise connection to their work is not clear to us.
\end{remark}

In the remainder of this section we will look into the special case where the homogeneous tropical polynomial $F$ is the product of tropical linear forms.
Then $\tropvariety{F}$ is an arrangement of tropical hyperplanes in the tropical projective torus.
Tropical hyperplane arrangements are studied in the context of tropical convexity, which we explain next; see \cites[\S5.2]{Tropical+Book}[Chap 6]{ETC}.

Let $V \in \RR^{d\times n}$ be a matrix with real entries.
The column $v^{(j)}$ of $V$, or rather its negative, may be read as the coefficient vector of the tropical linear form
\[
L_j(x) \ = \ \min(x_1-v_{1}^{(j)}, \ldots, x_d-v_{n}^{(j)}) \enspace .
\]
Its tropical vanishing locus is the min-tropical hyperplane $\tropvariety{L_j}$.
Since the coefficients of $V$ are real, the support of $L_j$ are the standard basis vectors $e_i$ of $\RR^d$.
So the Newton polytope of each tropical linear form $L_j$ is the standard simplex $\Delta_{d-1} = \conv\{e_i \mid i \in [d]\}$.
In the sequel we also use the same symbol $\Delta_{d-1}$ for the labeled point configuration $(e_1,\dots,e_d)$.
The simplex $\Delta_{d-1}$ coincides with the image of the orthogonal projection of $\extNewton{\Delta_{d-1}, -v^{(j)}}$.
The dome $\dome{\Delta_{d-1}, -v^{(j)}}$ is a pointed polyhedron with exactly one vertex and a $d$-dimensional recession cone; recall that we factored out the one-dimensional lineality space.
In this way, the dome becomes projectively equivalent to a $d$-simplex.

Multiplying the $n$ min-tropical linear forms yields
\begin{equation}\label{eq:LV}
  L_V \ = \ L_1\odot\cdots\odot L_n \enspace ,
\end{equation}
which is a homogeneous $d$-variate tropical polynomial of degree $n$.
The corresponding tropical hypersurface $\tropvariety{L_V}$ is a tropical hyperplane arrangement, namely the union of the tropical hyperplanes $\tropvariety{L_1},\ldots,\tropvariety{L_n}$ in $\torus{d}$, see \cite[Lemma 4.6]{ETC}.
The support $\supp(L_V)$ is the Minkowski sum of the support of each tropical linear form $L_j$.
That is, $\supp(L_V)$ is formed by the lattice points in the scaled standard simplex $n\Delta_{d-1}$; again, we use the latter symbol for that point configuration.

The recession fan of $\normalcomplex{L_V}$ coincides with the the normal fan of $n\Delta_{d-1}$.
Its maximal cones are of the form
\[
\pos(\{-e_0,-e_1, \ldots,-e_d\}\setminus\{-e_i\}),\,\quad i \in \{0,1,\ldots,d\} \enspace ,
\]
where $e_0 \coloneq -\1$.
The corresponding tropical toric variety $N(\Sigma)$ is a natural compactification of the tropical projective torus $\torus{d}$.
Namely, it coincides with the \emph{$(d{-}1)$-dimensional tropical projective space}
\[
\TP^{d-1} \coloneqq \left((\RR\cup\{\infty\})^{d}\setminus\{\infty\1\}\right)\big/{\RR\1};
\]
see \cite[Section 3.3]{Mikhalkin+Rau:2018}.
It carries the natural structure of a semimodule with respect to tropical scalar multiplication.
The pair of spaces $(\TP^{d-1},\torus{d})$ is homeomorphic to the pair formed by the standard simplex $\Delta_{d-1}$ and its interior; see \cite[Prop 5.3]{ETC}.
Now the next result follows from \cref{thm:hypersurface-tropical-toric}.

\begin{corollary}\label{cor:trop-proj-shellable}
  The tropical hyperplane arrangement $\tropvariety{L_V}$ in $\TP^{d-1}$ is shellable.
\end{corollary}

\begin{remark}
  Again, shellability determines the homotopy type of a tropical hyperplane arrangement $\tropvariety{L_V}$ in $\TP^{d-1}$.
  If $k$ is the number of interior vertices of the regular subdivision $\Phi(L_V)$ dual to $\tropvariety{L_V}$, then $\tropvariety{L_V}$ in $\TP^{d-1}$ is homotopy equivalent to a wedge of $k$ many $(d-1)$-spheres.
  This is a special case of \cite[Proposition 3.4.12]{Mikhalkin+Rau:2018}.
\end{remark}

The \emph{covector} of $x \in \torus{d}$ with respect to $V$ is the $d$-tuple $(G_1(x),\ldots, G_d(x))$ of subsets $G_i(x) \subseteq [n]$, where
\[
G_i(x) \ \coloneqq \ \SetOf{j\in [n]}{x_i - v_{i}^{(j)} = \min\{x_k - v_{k}^{(j)}\}} \enspace .
\]
For $G=(G_1, \ldots, G_d)$ and $H=(H_1, \ldots, H_d)$, we write $G\supseteq H$ whenever $G_i\supseteq H_i$ for all $i\in[d]$.
The \emph{covector cell}
\[
X_H \ \coloneqq \ \SetOf{ x\in \torus{d}}{(G_1(x),\ldots, G_d(x)) \supseteq H}
\]
is a closed convex polyhedron in the ordinary sense; cf.\ \cite[Chapter 6.3]{ETC}. 
We have $X_G\cap X_H = X_{G\cup H}$, and the covector cell $X_H$ is bounded (but possibly empty) if and only if $H_j \neq \emptyset$ for all $j\in[d]$.

\begin{proposition}[\cite{DevelinSturmfels04}, {\cite[Cor 6.16]{ETC}}]
  The covector cells $X_H$, where $H$ ranges over all covectors, are precisely the cells of the normal complex $\normalcomplex{L_V}$ arising as the orthogonal projection of $\dome{L_V}$.
\end{proposition}

The normal complex $\normalcomplex{L_V}$ is called the \emph{covector decomposition} induced by $V$, and we denote it by $\covdec{V}$. 
In the sequel, we will read the columns of $V$ as homogeneous coordinates of a labeled point configuration in $\torus{d}$.


By passing from covectors to the cardinalities of the component sets we obtain the \emph{coarse type} $(|G_1(x)|,\ldots,|G_d(x)|)$ of a point $x$ with respect to $V$.
Two generic points have the same coarse type with respect to $V$ if any only if they are contained in the same maximal covector cell; see \cite[Observation 4.10]{ETC}.
As the main result of this section, we are now ready to show that the coarse types give rise to a natural shelling of $\covdec{V}'$ and $\tropcomp{\covdec{V}}$.

\begin{theorem}\label{thm:lexshelling}
  Let $V \in \RR^{d\times n}$ be a matrix with real entries.
  Ordering the maximal cells of the covector decomposition $\covdec{V}$ lexicographically by their coarse types induces a shelling of both, the truncation $\covdec{V}\cap \cB$.
  This implies that it also induces a shelling of the tropical toric compactification $\tropcomp{\covdec{V}}$.
\end{theorem}

\begin{proof}
  Firstly, the maximal cells of $\covdec{V}$ are in one-to-one correspondence with the maximal cells of its truncation as well as with the facets of $\dome{V}$. 
  Second, the coarse type of a maximal covector cell $X_H$ corresponds to a lattice point in the support $\supp(L_V)=n\Delta_{d-1}$ which corresponds to a vertex of $\extNewton{V}$.
  Via duality that lattice point agrees with the first $d$ entries of the normal vector of the supporting hyperplane of the facet of $\dome{V}$ corresponding to $X_H$.
  
  It suffices to show that there exists a shelling of $\dome{V}'$ where the facets are ordered in lexicographic order of the normal vector and bounding facet coming last.
  \cref{thm:shelling-trop-toric} implies the statement for $\tropcomp{\dome{V}}$.
  
  We start by picking a point $(y,s) \in \RR^{d+1}$ in the interior of the dome $\dome{V}$.
  That is, $s \leq L_V(w)$.
  Since $\dome{V}$ is unbounded in the negative $e_{d+1}$ direction it is possible to choose $(y,s)$ such that the $(d{+}1)$st coordinate $s$ is arbitrarily small; this will be important later.
  Let $0 < \epsilon \ll 1$ be small enough such that the vector $\lexvec \coloneqq (\epsilon, \epsilon^2, \ldots, \epsilon^{d+1})$ induces a lexicographic order of the vertices of $\extNewton{V}$.
  That is, for vertices $(u,t)$ and $(u',t')$ of $\extNewton{V}$, we have $\langle\lexvec, (u,t)\rangle < \langle\lexvec, (u',t')\rangle$ if and only if $(u,t)$ is lexicographically smaller than $(u',t')$.
  Observe that the latter property holds if and only if $u$ is lexicographically smaller than $u'$.

  We claim that the line $L \coloneqq (y,s) + \RR\lexvec$ intersects the supporting hyperplanes of the facets of the dome in lexicographic order of their respective normal vectors.   
  Since $\lexvec$ can also be considered as a point on the moment curve in $\RR^{d+1}$ we can always choose $\epsilon$ such that the line $L$ is generic, i.e., each such hyperplane is met in a unique point.
  In order to find the intersection of the line $L$ with the hyperplane $x_{d+1} = t + \sum_{i=1}^d u_i x_i$ we need to solve the following equation for $\lambda$:
  \begin{align}\label{eq:arith:lexshelling}
    s - \left(t + \langle u, y\rangle\right) \ = \ \lambda \sum_{i=1}^d u_i\epsilon^i - \epsilon^{d+1} \enspace ,
  \end{align}
  where the left hand side of \cref{eq:arith:lexshelling} is less than or equal to zero.
  As $u \in \Delta_{d-1}$ has nonnegative coefficients and $\epsilon>0$, the expression $\sum_{i=1}^d u_i\epsilon^i$ is positive but arbitrarily small; the term $\epsilon^{d+1}$ may be neglected due to $\epsilon\ll 1$.
  Now we can still pick $s$ as a negative number of arbitrarily large absolute value, so that $\lambda$ must be strictly negative.
  That is to say, the ray $y + \RR_{\geq 0}\lexvec$ is entirely contained in $\dome{V}$.
  Moreover, the smaller $\sum_{i=1}^d u_i\epsilon^i$ is, the larger the absolute value of $\lambda$ becomes.
  Consequently, intersection points of $L$ with the facet supporting hyperplanes of $\dome{V}$, ordered in the direction $-\lexvec$, correspond to the lexicographic ordering of the facet normals.
  
  Finally, we chose a generic half-space $A$ bounding $\dome{V}'$ such that $\dome{V}'$ contains $(y,s)$.
  Note that $L$ intersects the bounding facet $B$ of $\dome{V}'$ in a point since $\dome{V}'$ is a polytope.
  If $L$ intersects $B$ in the relative interior, the statement follows directly.
  Otherwise, $L$ intersects $B$ in the relative interior of a ridge as $L$ intersects at most one other facet of $\dome{V}'$.
  To ensure the that $L$ intersects $B$ in its relative interior, we simply move $A$ further away from the polytope.
\end{proof}

\section{Collapsibility of tight spans}\label{sec:tightSpans}

In \cref{exmp:brodsky-tightspan} we saw that the tight span of a regular subdivision does not need to be shellable.
However, as we will demonstrate next, tight spans are always collapsible.
As a technical tool, we will employ Chari's version \cite{Chari:2000} of Forman's discrete Morse theory \cite{Forman:1998}.

\begin{figure}[th]
  \begin{minipage}{.48\linewidth}
    \centering
    \resizebox{!}{3cm}{
      \begin{tikzpicture}[x  = {(5em, 0em)},
  y  = {(0em, 5em)},
  scale = 1,
  color = {black}]
  
  \tikzstyle{blackdot} = [circle,draw=black,fill=black]
  \tikzstyle{whitedot} = [circle,draw=black,fill=white]
  \tikzstyle{edge} = [thick,draw=black]
  \tikzstyle{polygon} = [fill=lightgray]
  
  \coordinate (v1) at (0,2);
  \coordinate (v2) at (1,0);
  \coordinate (v3) at (1,1);
  \coordinate (v4) at (2,0);
  \coordinate (v5) at (2,1);
  \coordinate (v6) at (3,0);
  \coordinate (v7) at (3,1);
  
  \fill[polygon] (v1) -- (v2)-- (v6) -- (v7) -- cycle;
  \draw[edge] (v1) -- (v2)-- (v6) -- (v7) -- cycle;
  \draw[edge] (v1) -- (v2) -- (v3) -- cycle;
  \draw[edge] (v1) -- (v3) -- (v6) -- cycle;
  \draw[edge] (v1) -- (v4) -- (v6) -- cycle;
  \draw[edge] (v1) -- (v5) -- (v6) -- cycle;
  \draw[edge] (v5) -- (v7);
  
  \foreach \i in {1,2,...,7} {
    \filldraw[whitedot] (v\i) circle (7pt);
    \draw (v\i) node {$\i$ };
  }
  
\end{tikzpicture}
    }
  \end{minipage}
  \begin{minipage}{.48\linewidth}
    \begin{tikzpicture}[x  = {(3em, 0em)},
  y  = {(0em, 3em)},
  scale = 1,
  color = {black}]
  
  \tikzstyle{blackdot} = [circle,draw=black,fill=black]
  \tikzstyle{whitedot} = [circle,draw=black,fill=white]
  \tikzstyle{edge} = [thick,draw=black]
  \tikzstyle{polygon} = [fill=lightgray]
  \tikzstyle{vertex_white} = [text=black, inner sep=2pt, rectangle, rounded corners=3pt, fill=white, draw=black, thick]
  
  \coordinate (v156) at (.5,0);
  \coordinate (v157) at (1.3,.8);
  \coordinate (v567) at (1.3,-.8);
  
  \coordinate (v136) at (-1,0);
  \coordinate (v134) at (-2,.8);
  \coordinate (v246) at (-2,-.8);
  \coordinate (v234) at (-3,0);
  
  \fill[polygon] (v156) -- (v157) -- (v567) -- cycle;
  \draw[edge] (v156) -- (v157) -- (v567) -- cycle;
  
  \draw[edge] (v136) -- (v156);
  
  \fill[polygon] (v136) -- (v134) -- (v234) -- (v246) -- cycle;
  \draw[edge]  (v136) -- (v134) -- (v234) -- (v246) -- cycle;
  
  \foreach \i/\label in {156/1 5 6,157/1 5 7,567/5 6 7,136/1 3 6,134/1 3 4,246/2 4 6,234/2 3 4} {
    \node at (v\i) [vertex_white] {\footnotesize \label$^*$};
  }
\end{tikzpicture}
  \end{minipage}
  \caption{Regular triangulation $\Phi$ and its tight span, $T$, which is not shellable.
    One possible weight vector is $(0,3,0,1,1,0,4)$.
    Each vertex of $T$ corresponds to one of the maximal cells of $\Phi$.}
  \label{fig:brodsky-tightspan}
\end{figure}

Let $\HasseDiagram(C)$ be the Hasse diagram of the cell poset of the regular cell complex $C$, which does not need to be pure.
We view $\HasseDiagram(C)$ as a directed graph with one node per cell and directed edges $(\sigma,\tau)$ whenever $\sigma$ is a maximal cell in the boundary $\partial\tau$.
As $C$ is regular, the dimensions of $\sigma$ and $\tau$ differ by one.
By construction the digraph $\HasseDiagram(C)$ is acyclic.
An \emph{acyclic matching} is a matching $\mu$ in the graph $\HasseDiagram(C)$ with the additional property that reverting the order of the arcs in $\mu$ does not generate a directed cycle.
A cell is \emph{critical} with respect to $\mu$ if it is unmatched.
Recall that a \emph{matching} in a graph is a set of edges such that each node is covered at most once.
The empty matching is trivially acyclic, in which case all cells are critical.
The following result establishes the connection between acyclic matchings and the shellability of balls and spheres.
Recall that a pseudomanifold is a pure regular cell complex such that every cell of codimension one is contained in at most two maximal cells, and the dual graph is connected.

\begin{proposition}[{Chari \cite[Prop 4.1 and Thm 4.2]{Chari:2000}}]\label{lem:chari}
  Let $C$ be a shellable pseudomanifold.
  If $C$ is homeomorphic to a sphere, then $\HasseDiagram(C)$ admits an acyclic matching with exactly two critical cells.
  If $C$ is homeomorphic to a ball, then $\HasseDiagram(C)$ admits an acyclic matching with exactly one critical cell.
\end{proposition}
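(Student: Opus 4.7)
\medskip

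\noindent\textbf{Proof plan.}
The plan is to induct simultaneously on the dimension $d$ and the number $m$ of maximal cells, constructing the acyclic matching by processing the shelling $\sigma_1,\dots,\sigma_m$ step by step. The base case $d=0$ is trivial: a $0$-ball is a single point, so the empty matching on $\HasseDiagram(\Sigma)$ leaves one critical cell; a $0$-sphere consists of two points, and the empty matching leaves two critical cells.

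For the inductive step, set $\Sigma_j \coloneqq \bigcup_{k=1}^{j}\overline{\sigma_k}$. The key geometric input, which comes from combining the pseudomanifold assumption with conditions (S1)--(S3), is that for each $2\le j\le m$ the intersection $\partial\sigma_j\cap\Sigma_{j-1}$ is a pure, shellable $(d{-}1)$-ball sitting inside the sphere $\partial\sigma_j$, with one exception: when $\Sigma$ is a sphere and $j=m$, that intersection equals all of $\partial\sigma_j$. Consequently the ``new part'' $B_j\subset\partial\sigma_j$ consisting of cells of $\partial\sigma_j$ not already in $\Sigma_{j-1}$ is either empty (sphere, last step) or again a shellable $(d{-}1)$-ball, so the inductive hypothesis applies to it.

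The matching is then assembled as follows. For $j=1$, (S1) provides a shelling of the $(d{-}1)$-sphere $\partial\sigma_1$; by induction on dimension this yields an acyclic matching of $\HasseDiagram(\partial\sigma_1)$ with exactly two critical cells. Pick one of them that is top-dimensional in $\partial\sigma_1$ and pair it with $\sigma_1$: the resulting matching of $\HasseDiagram(\overline{\sigma_1})$ has exactly one critical cell. For $j\ge 2$ with $B_j$ nonempty, the inductive hypothesis produces an acyclic matching of $\HasseDiagram(B_j)$ with a single critical cell $\tau_j$, which is necessarily a top-dimensional cell of $B_j$; matching $\sigma_j$ with $\tau_j$ absorbs $\tau_j$ and introduces no new critical cell at step $j$. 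In the sphere case, the final step $j=m$ contributes no lower-dimensional new cells, and $\sigma_m$ remains unmatched, supplying the required second critical cell.

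The main obstacle, as is typical for this kind of construction, is verifying that the union of these local matchings is acyclic. The fix is to exploit the shelling order as a coarse stratification: every matched edge added at step $j$ has both endpoints in $\{\sigma_j\}\cup(B_j\setminus\Sigma_{j-1})$, so in any hypothetical directed cycle of the modified Hasse digraph, the shelling index $j(\sigma)$ of a cell $\sigma$ is monotone along the cycle and hence constant. This confines the cycle to a single step, where the matching is acyclic by the inductive hypothesis applied either to $\partial\sigma_1$ or to $B_j$. The cleanest way to package this argument is via the ``patchwork'' or cluster lemma for acyclic matchings, which states that an acyclic matching on each fibre of a poset map to an acyclic target assembles into an acyclic matching globally; here the target poset is the linear order of shelling steps, and the fibres are the blocks described above.
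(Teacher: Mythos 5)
The paper gives no proof of this proposition---it is quoted from Chari with a citation---so your argument has to stand on its own. Its overall architecture (induction on dimension and on the shelling step, plus the cluster/patchwork lemma over the linear order of steps to get global acyclicity) is the right one and matches Chari's. The base case, the treatment of $j=1$, and the acyclicity argument are fine.

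The gap is in the step $j\ge 2$. The set $B_j$ of cells of $\partial\sigma_j$ not in $\Sigma_{j-1}$ is not a subcomplex (it is closed under taking cofaces, not faces), so the inductive hypothesis---a statement about regular cell complexes homeomorphic to balls---does not apply to it. If you instead pass to the subcomplex generated by the new facets, or to any honest cell complex that is a ball, then an acyclic matching with exactly one critical cell forces that cell to be a \emph{vertex} (the Morse inequality $c_0\ge b_0=1$; the paper itself records this), not a top-dimensional cell, and a vertex cannot be matched to the $d$-cell $\sigma_j$. What you actually need is the \emph{relative} statement: for a proper nonempty initial segment $A_j=\partial\sigma_j\cap\Sigma_{j-1}$ of a shelling of the sphere $\partial\sigma_j$, the relative complex $(\partial\sigma_j,A_j)$ admits an acyclic matching whose unique critical cell is a $(d{-}1)$-cell---equivalently, $\{\sigma_j\}\cup B_j$ admits a perfect acyclic matching, i.e.\ $\overline{\sigma_j}$ collapses onto $A_j$. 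That relative claim is consistent with the Morse inequalities (since $H_*(\partial\sigma_j,A_j)\cong H_*(S^{d-1},B^{d-1})$ is concentrated in degree $d-1$), but it is not an instance of the absolute ball/sphere statement you are inducting on; it is precisely Chari's Proposition 4.1 and needs its own nested induction over the shelling $F_1,\dots,F_n$ of $\partial\sigma_j$ guaranteed by (S3), using the pseudomanifold hypothesis to see that only the last facet $F_n$ has its entire boundary already covered, so that $F_n$ is the cell left over to be matched with $\sigma_j$. Strengthen the induction hypothesis to this relative form and the proof closes; as written, the step as stated is false.
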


A regular cell complex is \emph{collapsible} if it admits an acyclic matching with exactly one critical cell.
It follows from the Morse inequalities that that cell must be a vertex.
Collapsibility implies contractibility.
In this way our next result strengthens \cite[Lemma 4.5]{Hirai:2006} and \cite[Theorem 10.59]{ETC}.

\begin{theorem}\label{thm:collapsible}
  The tight span of a regular subdivision of any point configuration is collapsible.
\end{theorem}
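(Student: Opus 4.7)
The plan is to apply Chari's correspondence (Proposition~\ref{lem:chari}) to the shellable sphere $\partial^\bullet\dome{A,\omega}$ provided by Proposition~\ref{prop:normal-complex}, and then to transfer the resulting acyclic matching to the tight span. First I would identify $\tightspan{A,\omega}$ with the antistar of the compactification vertex $*$ inside $\partial^\bullet\dome{A,\omega}$: orthogonal projection sends the bounded faces of $\dome{A,\omega}$ bijectively onto the bounded cells of the normal complex, and these are precisely the cells of the compactified boundary sphere that avoid $*$.

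The key step is to choose the shelling $\sigma_1,\dots,\sigma_m$ so that every facet containing $*$ appears after every facet that does not, i.e., the $*$-facets form a terminal block $\sigma_{k+1},\dots,\sigma_m$. Combining the freedom in picking the initial facet granted by Proposition~\ref{prop:normal-complex} with the fact that the reverse of a shelling of a sphere is again a shelling, one may perturb the vertical line so that its intersections with the bounded facets of $\dome{A,\omega}$ all have smaller $(d{+}1)$st coordinate than those with the unbounded facets. Applying Chari's theorem to such a shelling yields an acyclic matching $\mu$ on $\HasseDiagram(\partial^\bullet\dome{A,\omega})$ with exactly two critical cells: an initial vertex $v_0\in\sigma_1\subseteq\tightspan{A,\omega}$ and the terminal facet $\sigma_m\ni *$. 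The heart of the argument is to refine $\mu$ so that it respects the partition into antistar and closed star of $*$. In Chari's construction each non-critical cell is paired with another cell of the interval $[r_j,\sigma_j]$ by toggling a single vertex of the \enquote{new face} $\sigma_j\setminus r_j$; for $j$ in the terminal block the vertex $*$ has already been incorporated into the complex when $\sigma_{k+1}$ was shelled, so $\{*\}$ never occurs as a minimal new face and the new face of $\sigma_j$ contains a vertex distinct from $*$. Picking that vertex as the toggle keeps each matched pair wholly in the antistar or wholly in the star of~$*$.

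With this refinement $\mu$ decomposes as $\mu_T\sqcup\mu_*$, where $\mu_T$ is an acyclic matching on $\HasseDiagram(\tightspan{A,\omega})$ with sole critical cell $v_0$, certifying collapsibility; the companion matching $\mu_*$ absorbs the other critical cell $\sigma_m$ inside the cone $\star(*)$. The main obstacle is precisely this engineering step: arranging the terminal-block shelling and verifying that Chari's pairing can be carried out with toggle vertices avoiding $*$. Once that is in place, the star--antistar decomposition of the sphere aligns cleanly with the cell structure of the tight span, and the two critical cells of the sphere matching distribute one to each side, leaving a unique critical vertex on $\tightspan{A,\omega}$.
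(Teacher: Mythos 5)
Your identification of the tight span with the antistar of $*$ in the sphere $\partial^\bullet\dome{A,\omega}$ is correct, but the step on which everything else rests --- choosing a shelling of that sphere in which the facets containing $*$ form a terminal block, equivalently in which all bounded facets of the dome come first --- is impossible in general, and the paper's own running example already witnesses this. For the genus-two curve of \cref{ex:hyperelliptic-curve}, the bounded facets of $\dome{F}$ are the two $2$-cells enclosed by the two loops of the dumbbell, and these are \emph{disjoint}: they share no edge and no vertex. Any ordering of the facets of the $2$-sphere $\partial^\bullet\dome{F}$ that begins with these two cells violates condition (S2) of \cref{def:shellable} at $j=2$, since $\partial\sigma_2\cap\partial\sigma_1=\emptyset$ is not pure of dimension~$1$. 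So no shelling (line shelling or otherwise) puts the $*$-facets in a terminal block, and no amount of perturbing the vertical line or reversing the order can produce one. This is not a removable technicality: whenever the tight span is disconnected in its top dimension (here, whenever the dual curve has two ``separated'' cycles), the proposed initial segment fails to be shellable.

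Two further points. First, your description of Chari's pairing --- matching cells inside the interval $[r_j,\sigma_j]$ by ``toggling a single vertex of the new face'' --- is the simplicial-complex version; the cells of $\partial^\bullet\dome{A,\omega}$ are general polytopes (plus compactified unbounded ones), and Chari's construction for regular cell complexes proceeds recursively through shellings of the cell boundaries, so there is no vertex to toggle and the proposed refinement of $\mu$ is not available as stated. Second, even granted a matching respecting the star/antistar partition, concluding that the antistar piece is acyclic with exactly one critical cell requires an argument, not just the Euler-characteristic bookkeeping of distributing two critical cells to two sides. The paper circumvents all of this by working on the other side of the duality: it shells the regular subdivision $\Sigma$ itself (a ball), caps it with one top cell to form a sphere $\Sigma_+$, and then passes to the Poincar\'e dual matching, whose restriction to the duals of the \emph{interior} cells of $\Sigma$ --- which are exactly the cells of the tight span --- has a single critical vertex. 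That duality step is precisely what replaces the terminal-block shelling you are trying to engineer, and it is where you should redirect your argument.
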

\begin{proof}
  Let $C$ be a regular subdivision of some point configuration $\cA$ in $\RR^d$.
  As before we assume that $\cA$ is fulldimensional, i.e., the points in $\cA$ affinely span $\RR^d$.
  By \cref{prop:extNewton} there is a generic line, $L$, which induces a shelling of $C$.
  Topologically speaking, $C$ is a pseudomanifold which is supported on the polytope $P=\conv(\cA)$, which is a $d$-dimensional ball.
  The boundary $\partial C$ is a polytopal subdivision of $\partial P$.
  In particular, $\partial C$ is a pseudomanifold which is homeomorphic to the $(d{-}1)$-sphere.
  The same line $L$ from before also induces a shelling of $\partial C$.
  
  \begin{figure}[th]
    \centering
    \resizebox{\linewidth}{!}{

\begin{tikzpicture}[x  = {(1em, 0em)},
                    y  = {(0em, 7em)},
                    scale = 1,
                    color = {lightgray}]

  \coordinate (v0) at (20, 2);
  \coordinate (v1) at (-5.2, 2);
  \coordinate (v2) at (-15.6, 2);
  \coordinate (v3) at (-0.4, 2);
  \coordinate (v4) at (-10.4, 2);
  \coordinate (v5) at (5.94286, 2);
  \coordinate (v6) at (15.6, 2);
  \coordinate (v7) at (10.7429, 2);
  \coordinate (v8) at (-3.2, 1);
  \coordinate (v9) at (0, 1);
  \coordinate (v10) at (-12.8, 1);
  \coordinate (v11) at (-16, 1);
  \coordinate (v12) at (-19.2, 1);
  \coordinate (v13) at (3.2, 1);
  \coordinate (v14) at (-6.4, 1);
  \coordinate (v15) at (-9.6, 1);
  \coordinate (v16) at (12.8, 1);
  \coordinate (v17) at (6.4, 1);
  \coordinate (v18) at (19.2, 1);
  \coordinate (v19) at (16, 1);
  \coordinate (v20) at (9.6, 1);
  \coordinate (v21) at (6.30303, 0);
  \coordinate (v22) at (-11.44, 0);
  \coordinate (v23) at (-8.63396, 0);
  \coordinate (v24) at (-14.56, 0);
  \coordinate (v25) at (0.630303, 0);
  \coordinate (v26) at (11.44, 0);
  \coordinate (v27) at (14.56, 0);
  
  \tikzstyle{vertex_black} = [circle,draw=black,fill=black]
  \tikzstyle{vertex_white} = [text=black, inner sep=2pt, rectangle, rounded corners=3pt, fill=white, draw=black, thick]
  \tikzstyle{vertex_gray} = [text=black, inner sep=2pt, rectangle, rounded corners=3pt, fill=white, draw=lightgray]
  
  \tikzstyle{edge} = [->,>=stealth, color=black]
  \tikzstyle{matching} = [->,ultra thick,color=black]
  
  \node (v0) at (v0) [vertex_black] {};
  \foreach \i/\label in {1/1 2 3,2/2 3 4,3/1 3 6,4/3 4 6,5/1 5 6,6/1 5 7,7/5 6 7,9/1 3,10/2 3,11/3 4,13/1 6,14/3 6,16/1 5,17/5 6,19/5 7,23/3,26/5} {
    \node (v\i) at (v\i) [vertex_white] {\label};
  }
  \foreach \i/\label in {8/1 2,12/2 4,15/4 6,18/1 7,20/6 7,21/1,22/2,24/4,25/6,27/7}{
    \node (v\i) at (v\i) [vertex_gray] {\label};
  }

  \foreach \i/\k in {8/1,9/3,10/1,11/2,11/4,12/2,13/5,14/3,15/4,16/6,17/5,17/7,19/6,20/7,21/9,21/13,21/16,21/18,22/8,22/10,23/9,23/10,23/14,24/11,24/12,24/15,25/13,25/14,25/17,25/20,26/16,26/19,27/18,27/19} {
   \draw[edge] (v\i) -- (v\k);
  }
  \foreach \i/\k in {8,12,15,18,20}{
    \draw[edge, lightgray] (v\i) -- (v0);
  }
  \foreach \i/\k in {7/19,6/18,5/16,4/14,3/13,2/10,1/9,20/27,17/26,8/21,15/25,11/23,12/22}{
    \draw[matching] (v\i) -- (v\k);
  }

\end{tikzpicture} 
    }
    \caption{Acyclic matching in $\HasseDiagram(C_+)$, where $C$ is the regular triangulation from \cref{exmp:brodsky-tightspan} and \cref{exmp:brodsky-tightspan-collapsible}.
      The black node at the top right corresponds to the quadrangle from \cref{eq:quad}.}
    \label{fig:brodsky-hasse}
  \end{figure}
  
  By \cref{lem:chari} that shelling of $C$ induced by $L$ gives rise to an acyclic matching, $\mu$, in the Hasse diagram $\HasseDiagram(C)$ with exactly one critical $0$-cell, which we denote $\alpha$.
  In view of the (weak) Morse inequalities \cite[Cor 3.7]{Forman:1998}, that single $0$-cell corresponds to Betti number zero being one for a ball of any dimension.
  As the line $L$ also induces a shelling on $\partial C$, the cell $\alpha$ lies in $\partial C$ and, moreover, there is also exactly one $(d{-}1)$-cell, $\beta$, in $\partial C$ which is unmatched in $\partial C$; again this follows from \cref{lem:chari}.
  That is to say, the $(d{-}1)$-cell $\beta$ is matched to a $d$-cell, $\gamma$, in $C$, and it is the only boundary cell with this property.
  Thus, the restriction of $\mu$ to $\HasseDiagram(\partial C)$ is an acyclic matching with exactly two critical cells.
  
  We now extend $C$ to form another regular cell complex, which we denote $C_+$.
  It is formed from $C$ by taking $\delta:=P$ as one additional $d$-cell, which is glued to $C$ along their common boundary.
  By construction, $C_+$ is a regular cell complex and homeomorphic to the $d$-sphere; see \cref{fig:brodsky-hasse} for an example.
  The acyclic matching $\mu$ of $\HasseDiagram(C)$ is also an acyclic matching of $\HasseDiagram(C_+)$.
  It has exactly one critical $0$-cell, namely $\alpha$, and one critical $d$-cell, namely $\delta$.
  
  Turning the Hasse diagram $\HasseDiagram(C_+)$ upside down, we obtain the Hasse diagram of the Poincar\'e dual $C_+^*$, and $\mu$ induces an acyclic matching $\mu^*$ on $\HasseDiagram(C_+^*)$.
  The Poincar\'e dual $C_+^*$ is again a pseudomanifold because the subdivision $C$ is regular.
  Since $\mu$ restricts to an acyclic matching on $\HasseDiagram(\partial C)$, it follows that $\mu^*$ restricts to the dual of $\partial C$ in $\HasseDiagram(C_+^*)$.
  The cells of the tight span of $C$ are in bijection with the interior cells of $C$.
  This shows that $\mu^*$ restricted to the tight span is an acyclic matching with a unique critical cell, namely the $0$-cell $\gamma^*$.
\end{proof}

\begin{example}\label{exmp:brodsky-tightspan-collapsible}
  Let $C$ be the regular triangulation from \cref{exmp:brodsky-tightspan} and \cref{fig:brodsky-tightspan}.
  One possible shelling order of the maximal cells reads $123$, $234$, $346$, $136$, $156$, $157$, $567$.
  The induced acyclic matching of the Hasse diagram $\HasseDiagram(C_+)$ is shown in \cref{fig:brodsky-hasse}.
  The unique critical $0$-cell $\alpha$ is the vertex labeled $4$, the boundary edge $\beta=17$ is matched to the $2$-cell $\gamma=157$.
\end{example}

\begin{remark}
  \cref{thm:collapsible} is only a special case of a stronger result, which requires some extra work.
  Here is a sketch.
  Let $\sigma_1,\dots,\sigma_m$ be a generalized shelling of a regular cell complex $C$, as in \cite{Chari:2000}.
  Assume for a moment that $C$ is a simplicial complex, and let $C^*$ be the dual block complex; see Munkres \cite[\S64]{Munkres:1984}.
  Under mild conditions on $C$, the dual block complex is well-behaved.
  For instance, this is the case when $C$ is a combinatorial ball (such as, e.g., a not necessarily regular triangulation of a finite point set).
  In that case the generalized shelling should induce a generalized shelling of $C^*$; this requires the dual blocks to be balls, such that $C^*$ is again a cell complex.
  The number of critical cells of the two generalized shellings is the same, where the boundary effects need to be tracked carefully.
  It is possible to generalize this idea to more general cell decompositions of manifolds.
  For instance, following the approach of Basak \cite{Basak:2010} should lead to a discrete Morse theory version of Poincar\'e--Alexander--Lefschetz duality; cf.\ Bredon \cite[Theorem~8.3]{Bredon:1993}.
  See Adiprasito and Benedetti \cite[Proposition I.1.8]{AdiprasitoBenedetti:2017} for a partial result concerning shellable polytopal balls.
  Working out the necessary details is beyond the scope of the present article.
\end{remark}


\section{About the shellability of tropical polytopes}\label{sec:conclusion}

We now return to the situation of \cref{subsec:hyperplane-arrangements}.
Again, let $V \in \RR^{d\times n}$ be a matrix with real entries.
This yields the tropical hyperplane arrangement $\tropvariety{L_V}$ in $\torus{d}$.
The covector cells with respect to $V$ form the covector decomposition $\covdec{V}$, which is a polyhedral decomposition of $\torus{d}$.
Now, the union of those covector cells which are bounded form the \emph{tropical polytope} $\maxtconv(V)$, namely the max-tropical convex hull of the columns of the matrix $V$; see \cite[Chap 5 \& 6]{ETC} for further details.
In this way, the tropical polytope $\maxtconv(V)$ forms a not necessarily pure polytopal subcomplex of $\covdec{V}$, and also a subcomplex of both, the truncation $\covdec{V}'$ and the tropical toric compactification $\tropcomp{\covdec{V}}$.
By \cref{thm:lexshelling} lexicographically sorting the maximal cells of $\covdec{V}$ by coarse type yields shellings of the latter two polytopal complexes.
It is a natural question whether the shelling from \cref{thm:hypersurface-truncation} induces a shelling of $\maxtconv(V)$.
Our next example shows that this is not the case.

\begin{example}\label{ex:lexshelling}
  \begin{figure}[t]
    \begin{minipage}{.48\linewidth}\raggedright
      \begin{tikzpicture}[x  = {(1cm,0cm)},
  y  = {(0cm,1cm)},
  scale = 1]
  
  \node[label=above left:{$a$}] (a) at (0,2) {};
  \node[label=below right:{$c$}] (c) at (2,1) {};
  \node[label=above left:{$b$}] (b) at (1,1) {};
  \node[label=above right:{$d$}] (d) at (2,4) {};
  
  \draw (a.center) -- (-2, 0);
  \draw (b.center) -- (0,0);
  \draw (c.center) -- (1,0);
  \draw (d.center) -- (-2,0);
  
  \node (l) at (4.5,6.5) {};
  \foreach \i in {a,b,c,d} {
    \draw (\i.center) -- (\i |- l);
    \draw (\i.center) -- (\i -| l);
  }
  
  \draw[fill=orange] (a) circle  (3pt);
  \draw[fill=red] (b) circle  (3pt);
  \draw[fill=olive] (c) circle  (3pt);
  \draw[fill=blue] (d) circle  (3pt);

  \node at (-2,3.5) {\small $(a,d,bc)$};
  \draw[-Stealth] (-1,3.5) to [bend left=35] (.85,2.75);
  
  \node at (3.25,3.25) {\small $(ab,d,c)$};
  \draw[-Stealth] (2.5,3.25) to [bend right=35] (1.65,2.95);
  
  \node at (2.75,1.4) {\small $(b,ad,c)$};
  \draw[-Stealth] (2.75,1.65) to [bend right=35] (1.75,1.75);
  
  
  
  
  \node[draw, circle, inner sep=0pt, minimum size=2.75ex, font=\tiny] at (-1.5,2) {1};
  \node[draw, circle, inner sep=0pt, minimum size=2.75ex, font=\tiny] at (-.4,.75) {2};
  \node[draw, circle, inner sep=0pt, minimum size=2.75ex, font=\tiny] at (.9,.4) {3};
  \node[draw, circle, inner sep=0pt, minimum size=2.75ex, font=\tiny] at (3,.4) {4};
  \node[draw, circle, inner sep=0pt, minimum size=2.75ex, font=\tiny] at (.5,4.5) {5};
  \node[draw, circle, inner sep=0pt, minimum size=2.75ex, font=\tiny] at (.7,2.3) {6};
  \node[draw, circle, inner sep=0pt, minimum size=2.75ex, font=\tiny] at (1.5,1.4) {7};
  \node[draw, circle, inner sep=0pt, minimum size=2.75ex, font=\tiny] at (1.5,5) {8};
  \node[draw, circle, inner sep=0pt, minimum size=2.75ex, font=\tiny] at (1.5,2.4) {9};
  \node[draw, circle, inner sep=0pt, minimum size=2.75ex, font=\tiny] at (4,1.4) {10};
  \node[draw, circle, inner sep=0pt, minimum size=2.75ex, font=\tiny] at (3.5,2.5) {11};
  \node[draw, circle, inner sep=0pt, minimum size=2.75ex, font=\tiny] at (3.5,5.5) {12};
\end{tikzpicture}
    \end{minipage}
    \hfill
    \begin{minipage}{.48\linewidth}\raggedleft
      \begin{tikzpicture}[scale=.6]
  \footnotesize	
  \coordinate (x) at (90:4cm) {};
  \coordinate (y) at (-30:4cm) {};
  \coordinate (z) at (210:4cm) {};
  
  \node (310) at (barycentric cs:x=3,y=1,z=0) {};
  \node (301) at (barycentric cs:x=3,y=0,z=1) {};
  \node (220) at (barycentric cs:x=2,y=2,z=0) {};
  \node (202) at (barycentric cs:x=2,y=0,z=2) {};
  \node (130) at (barycentric cs:x=1,y=3,z=0) {};
  \node (103) at (barycentric cs:x=1,y=0,z=3) {};
  \node (211) at (barycentric cs:x=2,y=1,z=1) {};
  \node (121) at (barycentric cs:x=1,y=2,z=1) {};
  \node (112) at (barycentric cs:x=1,y=1,z=2) {};
  \node (022) at (barycentric cs:x=0,y=2,z=2) {};
  \node (031) at (barycentric cs:x=0,y=3,z=1) {};
  \node (013) at (barycentric cs:x=0,y=1,z=3) {};
  
  \node[label=above: {$400$}] (400) at (x) {};
  \node[label=below right:{$040$}] (040) at (y) {};
  \node[label=below left:{$004$}]  (004) at (z) {};

  \fill[color=blue, opacity=.8] (400.center) -- (202.center) -- (211.center) -- (310.center) -- cycle;
  \fill[color=orange, opacity=.8] (004.center) -- (022.center) -- (112.center) -- (103.center) -- cycle;
  \fill[color=olive, opacity=.8] (040.center) -- (031.center) -- (121.center) -- (220.center) --cycle;
  \fill[color=red, opacity=.8] (031.center) -- (121.center) -- (022.center) --cycle;
  
  \draw(031.center) -- (121.center) -- (220.center) -- (310.center) -- (211.center)  -- (121.center) -- (022.center) -- (112.center) -- (211.center) -- (202.center) -- (103.center) -- (112.center);

  \draw[thick] (004.center) -- (400.center) -- (040.center) -- cycle; 
  
  \foreach \i in {400, 040, 004, 310, 301, 220, 202, 130, 103, 211, 121, 112, 022, 031, 013}{
    \draw[fill=white] (\i) circle (2pt);
  }

  \foreach\coord in {103,202,301} {
    \node[label=left: {$\coord$}] at (\coord) {};
  }
  \foreach\coord in {130,220,310} {
    \node[label=right: {$\coord$}] at (\coord) {};
  }
  \foreach\coord in {013,022,031} {
    \node[label=below: {$\coord$}] at (\coord) {};
  }
\end{tikzpicture}
    \end{minipage}
    \caption{Covector decomposition of $\torus{3}$ (left) induced by $V$ in \eqref{eq:counterlex:pts} and its dual subdivision of $4 \Delta_{2}$ (right).
      The points $a,b,c,d$ correspond to the columns of $V$.
      Each point is marked in the same color as the corresponding cell in the dual mixed subdivision.
      The three maximal bounded cells are denoted by their covectors.
    }   
    \label{fig:counterlex:polytope}
  \end{figure}
  Consider the $3{\times}4$-matrix
  \begin{equation}\label{eq:counterlex:pts}
    V = \begin{pmatrix}
      0 & 0 & 0 & 0 \\
      0 & 1 & 2 & 2 \\
      2 & 1 & 1 & 4
    \end{pmatrix}
  \end{equation}
  whose columns form a configuration of four points in the plane $\torus{3}$.
  \cref{fig:counterlex:polytope} displays the covector subdivision of $\covdec{V}$ of $\torus{3}$ and the mixed subdivision $\privileged{L_V}$ of $4\Delta_2$.
  The latter mixed subdivision is not fine because the matrix $V$ is not tropically generic; observe that this particular covector subdivision is pure.
  A maximal cell of $\covdec{V}$ corresponds to a lattice point in $4\Delta_{2}$ whose coordinates coincide with the coarse type of that cell.
  
  The max-tropical polytope $\maxtconv(V)$ is the union of the three bounded two-dimensional covector cells (corresponding to the three interior lattice points of $4\Delta_2$).
  The coarse types of the three bounded covector cells read $(1,1,2)$, $(1,2,1)$, $(2,1,1)$, in lexicographic order.
  This is not a shelling.
\end{example}

The covector decomposition of the tropical polytope $\maxtconv(V)$ is a geometric realization of the tight span $\tightspan{L_V}$.
Therefore, the following result is a direct consequence of \cref{thm:collapsible}.

\begin{corollary}
  The covector decomposition of a tropical polytope is collapsible.
\end{corollary}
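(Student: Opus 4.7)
The plan is to obtain this statement as an immediate corollary of \cref{thm:collapsible}, using the identification (already flagged in the sentence preceding the statement) of the covector decomposition of $\maxtconv(V)$ with the tight span $\tightspan{L_V}$ of the regular subdivision induced by $L_V$.

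Concretely, I would first unpack the relevant definitions. By the proposition cited from \cite[Cor 6.16]{ETC}, the cells of the normal complex $\normalcomplex{L_V}$ are precisely the covector cells $X_H$. A covector cell $X_H$ is bounded exactly when each component $H_i$ is nonempty, and by construction the tropical polytope $\maxtconv(V)$ is the union of these bounded covector cells. On the other hand, the tight span $\tightspan{L_V}$ is defined as the subcomplex of bounded cells of $\normalcomplex{L_V}$. Therefore the covector decomposition of $\maxtconv(V)$ and the tight span $\tightspan{L_V}$ are the same regular polyhedral complex.

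Second, I would apply \cref{thm:collapsible} to the point configuration $A = \supp(L_V)$, which consists of the lattice points in $n\Delta_{d-1}$, with the height function determined by the matrix $V$ via \eqref{eq:LV}. Since $L_V$ is homogeneous, the dome $\dome{L_V}$ has the lineality spanned by $(\1,n)$, and as in the discussion preceding \cref{cor:homogeneous-hypersurface} one passes to the quotient by this lineality so that the hypotheses of \cref{thm:collapsible} (affinely spanning point configuration) are satisfied. The theorem then yields that $\tightspan{L_V}$ is collapsible, and by the identification above, so is the covector decomposition of $\maxtconv(V)$.

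There is no real obstacle here: the substantive work has already been carried out in \cref{thm:collapsible}, whose proof routes the Bruggesser--Mani shelling of $\extNewton{A,\omega}$ through Chari's correspondence between shellings and acyclic matchings, and then through Poincaré duality to get an acyclic matching on the tight span with a single critical vertex. The only point that deserves a line of verification is that the identification of covector cells with normal complex cells respects the cell structure (not just the underlying set), which is immediate from $X_G \cap X_H = X_{G\cup H}$ and the orthogonal projection description of the normal complex.
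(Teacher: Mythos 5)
Your proposal is correct and follows the same route as the paper, which simply observes that the covector decomposition of $\maxtconv(V)$ is a geometric realization of the tight span $\tightspan{L_V}$ and then invokes \cref{thm:collapsible}. Your extra care about the affinely-spanning hypothesis (passing to the quotient modulo the lineality spanned by $(\1,n)$) is a reasonable fleshing-out of a detail the paper leaves implicit.
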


While the induced order from \cref{thm:lexshelling}  of \cref{ex:lexshelling} does not yield a shelling, in fact,
\[
(1,1,2) \,,\  (2,1,1) \,,\  (1,2,1)
\]
is a shelling order.
We close with a final question.
\begin{question}\label{q:trop-polytope-shellable}
  Is the covector decomposition of every tropical polytope shellable?
\end{question}

\printbibliography

\end{document}